\documentclass[a4paper]{amsart}
\usepackage[textwidth=15cm,top=3cm, bottom=3cm, hcentering]{geometry}
\usepackage[hidelinks]{hyperref}

\usepackage{amsmath,amscd,amsthm,amssymb,amsfonts}
\usepackage[utf8]{inputenc}
\normalfont
\usepackage[T1]{fontenc}
\usepackage{tikz-cd}
\usepackage{tikz}
\usepackage{enumerate}
\setcounter{tocdepth}{2}
\usepackage[all]{xy}
\usepackage{latexsym, verbatim}
\usepackage{graphicx}
\usepackage{color}
\usepackage{url}
\usepackage{lmodern}
\usepackage{slashed}
\usepackage{enumitem}
\usepackage{changepage}
\usepackage{adjustbox}
\usepackage{floatpag}
\usepackage[labelformat=empty]{caption}
\usepackage{soul}
\usepackage{tikz-cd}
\usepackage{tikz}

\usepackage{stackengine}

\newtheorem{theo}{Theorem}[section]
\newtheorem{lemm}[theo]{Lemma}
\newtheorem{prop}[theo]{Proposition}
\newtheorem{cor}[theo]{Corollary}
\theoremstyle{definition}
\newtheorem{defn}[theo]{Definition}
\newtheorem{exam}[theo]{Example}
\newtheorem{defprop}[theo]{Definition-Proposition}

\setcounter{tocdepth}{1}
\theoremstyle{remark}
\newtheorem{rmk}[theo]{Remark}

\numberwithin{equation}{section}


\newcommand{\End}{\mathrm{End}}
\newcommand{\Ker}{\mathrm{Ker}}

\newcommand{\Img}{\mathrm{Im}}

\newcommand{\rank}{\mathrm{rank}}

\newcommand{\ov}[1]{\overline{#1}}

\newcommand{\BV}{\mathrm{BV}}
\newcommand{\Hycom}{\mathrm{Hycom}}

\newcommand{\g}{\mathfrak{g}}

\newcommand{\JJ}{\mathbb{J}}
\newcommand{\CC}{\mathbb{C}}

\newcommand{\RR}{\mathbb{R}}

\newcommand{\Z}{\mathbb{Z}}
\newcommand{\R}{\mathbb{R}}

\newcommand{\cA}{{\mathcal A}}
\newcommand{\Aa}{\mathcal{A}}

\newcommand{\Hh}{\mathcal{H}}

\newcommand{\Ll}{\mathcal{L}}

\newcommand{\Tt}{\mathcal{T}}

\newcommand{\del}{\partial}
\newcommand{\delb}{{\bar \partial}}
\newcommand{\mub}{{\bar \mu}}

\newcommand{\lab}{{\bar \lambda}}
\newcommand{\la}{\lambda}
\newcommand{\La}{\Lambda}
\newcommand{\taub}{{\bar \tau}}

\newcommand{\dR}{\mathrm{dR}}

\newcommand{\Joana}[1]{{\color{blue}{#1}}}
\newcommand{\Scott}[1]{{\color{red}{#1}}}

\title{Homotopy BV-algebras in Hermitian geometry}

\author[J. Cirici]{Joana Cirici}
\address[J. Cirici]{Departament de Matemàtiques i Informàtica, Universitat de Barcelona\\
Gran Via 585\\
08007 Barcelona, Spain}
\email{jcirici@ub.edu}

\author[S. Wilson]{Scott O. Wilson}
  \address[S. Wilson]{Department of Mathematics, Queens College, City University of New York, 65-30 Kissena Blvd., Flushing, NY 11367}
  \email{scott.wilson@qc.cuny.edu}

  \thanks{
J. Cirici acknowledges the Serra H\'{u}nter Program, the Spanish State Research Agency (EUR2023-143450, PID2020-117971GB-C22 and CEX2020-001084-M), the French National Research Agency (ANR-20-CE40-0016 HighAGT) and
Govern de Catalunya (2021-SGR-00697). S. O. Wilson acknowledges support provided by the Simons Foundation Award program for Mathematicians (\#963783) and a PSC-CUNY Award, jointly funded by The Professional Staff Congress and The City University of New York (TRADB  \# 66358-00 54). 
}

\begin{document}

\begin{abstract}
We show that the de Rham complex of any almost Hermitian manifold carries a natural commutative $\BV_\infty$-algebra structure satisfying the degeneration property. In the almost K\"ahler case, this recovers Koszul's BV-algebra, defined for any Poisson manifold. As a consequence, both the Dolbeault and the de Rham cohomologies of any compact Hermitian manifold are canonically endowed with homotopy hypercommutative algebra structures, also known as formal homotopy Frobenius manifolds. 
Similar results are developed for (almost) symplectic manifolds with Lagrangian subbundles.
\end{abstract}

\maketitle

\section{Introduction}

The notion of Batalin–Vilkovisky algebra plays an important role in geometry,
topology and mathematical physics.
Initially developed in the context of quantum field theory as a tool to
compute path integrals in the presence of symmetry, the BV formalism  was subsequently translated to the context of supermanifolds equipped with symplectic or Poisson structures.
In algebraic topology, BV-algebras have proven to be extremely useful, with applications to
string topology, the study of cohomological operations, and to deformation quantization, amongst others.

Motivated by the mirror symmetry program, Barannikov and Kontsevich \cite{BaKo} gave a recipe which, out of a BV algebra satisfying a certain degeneration property, one obtains a hypercommutative algebra structure in cohomology.
Hypercommutative algebras are governed by the homology operad of the Deligne-Mumford-Knudsen compactification of the moduli space of genus zero surfaces with marked points, and lead to formal Frobenious manifold structures, which in turn are related to quantum cohomology.
The approach via deformation theory of Barannikov and Kontsevich has been recently refined and generalized
by means of operadic machinnery \cite{KMS}, \cite{DCV}, \cite{DSV}. In its higher homotopical generalization, any  $\BV_\infty$-algebra satisfying the so-called degeneration property gives rise to a homotopy hypercommutative algebra structure.

Various geometries are known to encode BV-algebra structures satisfying the degeneration property: Koszul \cite{Koszul} showed that the de Rham algebra of any Poisson manifold
carries a BV-operator, defined using the exterior differential and the contraction operator by the Poisson bi-vector field. This in particular applies to symplectic manifolds. The degeneration property was noticed by Dotsenko, Shadrin and Vallette \cite{DSV}, who also extended the theory to Jacobi manifolds and their associated commutative $\BV_\infty$-algebras. Similarly, Braun and Lazarev \cite{BrLa} prove that generalized Poisson structures (given by multivector fields) give rise to commutative $\BV_\infty$-algebras satisfying degeneration.
Here we enlarge this list to including Hermitian, and almost Hermitian manifolds. 

A useful result that will guide all our geometric examples is the fact that, given a commutative dg-algebra together with an operator $\Lambda$ of order $\leq 2$,
one always obtains a commutative $\BV_\infty$-algebra by conjugating the differential with $\exp(\Lambda \xi)$, where $\xi$ is a formal parameter (see Proposition \ref{prop;sillychoice}). The underlying multicomplex satisfies the degeneration property, by construction. The $\BV_\infty$-operators are in this case given by 
\[\Delta_k={1\over k!}\underbrace{[\Lambda,[\Lambda,\cdots[\Lambda}_{k},d]]].\]
For Hermitian manifolds, the main observation is that a suitable generalization of the K\"ahler identities, first due to Demailly in the Hermitian case \cite{De}, and derived independently by
Tomassini and Wang \cite{TW} and  
Fernandez and Hosmer in the almost Hermitian case \cite{FH}, allows for an explicit description of the commutative $\BV_\infty$-algebra obtained after conjugation by $\exp(\Lambda \xi)$, where $\Lambda$ is the formal adjoint to the Lefschetz operator. In short,  for any Hermitian manifold, the tuple
\[
(\Aa_\CC, \wedge, \, \Delta_0=\delb \, , \, \Delta_1=-i( \del^* + \tau^*) \, ,\, \Delta_2= i\la^* \,)
\]
is a commutative $\BV_\infty$-algebra extending the Dolbeault algebra (see Theorem~\ref{mainHermitian}). Here $(\Aa_\CC,\wedge)$ is the complex de Rham algebra of the manifold, $\Delta_2$ is the adjoint of wedging with the $3$-form $\la = \del \omega$, where $\omega$ is the fundamental form associated to the Hermitian metric, and $\tau :=  [\La, \la]$ is the
 torsion operator. Note that both $\tau$ and $\lambda$ vanish in the K\"ahler case. 
 
 Similarly, there is a real de Rham commutative $\BV_\infty$-algebra for any almost Hermitian manifold, 
 \[
\left( \Aa, \wedge, d, \, \Delta_1 =  - (d_c^* + T_c^*) \, , \, \Delta_2 = d \omega_c^* \, \right)
 \]
 where $d_c^* :=-\star J^{-1} d J\star$ is the formal adjoint of $d_c:=J^{-1}dJ$, $d \omega_c^*$ is the adjoint of $d\omega_c = J^{-1} d\omega J$, and  $T_c^*$ is the adjoint of $T_c:=J^{-1}[\Lambda , d\omega]J$.
 This generalizes Koszul's $\BV$-algebra on the de Rham forms of an almost K\"ahler (and hence symplectic) manifold, since in that case $d \omega =0$ and  $T:=[\Lambda , d\omega]=0$ (see Theorem~\ref{mainAH}). 
Using canoncial deformation retracts given by Hodge theory one obtains, in the compact case, canonical hypercommutative algebra structures that are non-trivial in general.

By similar methods, an additional geometric set-up we consider is that of (almost) symplectic manifolds equipped with a Lagrangian subbundle. Indeed, there is real-de Rham version, as well as a ``Dolbeault-type'' version in the case that the Lagrangian subbundle is integrable (Theorem \ref{thm;s-L}). Such Lagrangian-type structures have garnered significant attention recently  in mathematical considerations of  Type II-A data \cite{LTY},  \cite{HK}, \cite{FPPZ}.
It is natural to wonder if the  commutative $\BV_\infty$-algebras obtained here, and its induced homotopy hypercommutative algebras, play a role in a theory of mirror symmetry, beyond the K\"ahler realm.

\medskip 

Let us briefly explain the structure of this paper. Section \ref{algprel} includes algebraic preliminaries on $\BV$ and hypercommutative algebras. Hermitian manifolds are treated in  Section \ref{HermitianSec}. We then treat the almost Hermitian setting in Section  \ref{SecAHAS}, where we also study (almost) symplectic manifolds with integrable polarizations. Lastly, Section \ref{SectionExamples} is devoted to examples, using Chevalley-Eilenberg algebras in nilmanifold theory. We have tried to make this paper accessible to both geometers and homotopy theorists, keeping the operadic machinery to a minimum and recalling the main geometric structures needed, in each section. In this sense, we intend the last section on examples to be quite pedagogical.

\subsection*{Acknowledgments}
The authors would like to thank Vladimir Dotsenko, Geoffroy Horel, Fernando Muro and Bruno Vallette for many useful discussions.

\section{Algebraic preliminaries}\label{algprel}

In this purely algebraic section, we collect the main results on $\BV$-algebras and their relation to hypercommutative algebras needed later. Most of these appear in or are inspired by the works of Khoroshkin, Shadrin and Markarian \cite{KMS} and Dotsenko, Shadrin and Vallette \cite{DSV}. We will use cohomological grading.  For the reader that is mainly interested in the applications to complex and symplectic geometry, yielding (homotopy) $\BV$-algebras, the more operadic methods of this section might be skipped until it is clear they are necessary, in particular for development of homotopy hypercommutative algebras.

\subsection{Multicomplexes and the gauge equation} We review how any multicomplex satisfying the degeneration property
determines an explicit solution to the gauge equation in terms of the data given by any deformation retract.
\begin{defn} \label{defn;multicplx}
A \emph{multicomplex} $(A, \Delta_0= d, \Delta_1, \Delta_2 ,\ldots)$ is a $\Z$-graded vector space $A$ with a family of 
linear operators $\Delta_i$ of degree $1-2i$ satisfying
\[
\sum_{i= 0}^n \Delta_i \Delta_{n-i} = 0 \quad \text{for all }\quad n \geq 0.
\]
\end{defn}

In particular, $(A,d)$ is a cochain complex, and we denote its cohomology by $H(A)$.
Any multicomplex with $\Delta_i = 0$ for $i \geq 2$, is a double complex with $|d| = +1$ and $| \Delta_1| = -1$. 
For a multicomplex of the form $(A, \Delta_0= d, \Delta_1, \Delta_2 ,\ldots, \Delta_n)$,
the defining equation is equivalent to \[(d + \Delta_1 + \cdots +\Delta_n)^2=0.\]

Multicomplex structures may be transferred to cohomology using deformation retracts as follows:
\begin{defn}
A \textit{deformation retract} of a cochain complex $(A,d)$ is given by a diagram
\[
\begin{tikzcd}[ampersand replacement = \&]
(A,d) \arrow[r, rightarrow, shift left, "\rho"] \arrow[r, shift right, leftarrow, "\iota" swap] \arrow[loop left, "h"] \& (H(A),0),
\end{tikzcd}
\]
where $\iota$ and $\rho$ are morphisms of complexes
such that 
$\rho \iota=1$
and $dh+hd=\iota\rho-1$.
\end{defn}
Given a multicomplex $(A, \Delta_0= d, \Delta_1, \Delta_2 ,\ldots)$, a choice of a deformation retract $(\iota,\rho,h)$ for $(A,d)$
 defines a multicomplex structure $(H(A),\Delta_0'=0,\Delta_1',\Delta_2' ,\ldots)$ on cohomology, where 
\[\Delta_n':=\sum_{\substack{j_1+\cdots+j_k=n\\ j_i\geq 1}} \rho \Delta_{j_1}(h\Delta_{j_2})\cdots(h\Delta_{j_k})\iota\text{ for all }n\geq 1.\]

An important special class of multicomplexes arises when imposing the so-called degeneration property, for which $\Delta_i'=0$. This was introduced in \cite{Terilla}, \cite{KKP} for $\BV$-algebras but, in fact, the degeneration property only affects the underlying double complex. A generalization to multicomplexes appears in \cite{BrLa} and \cite{DSV}.
We refer to the latter reference for the various equivalences in the definition below.

\begin{defprop}
 A multicomplex $(A, \Delta_0= d, \Delta_1, \Delta_2 ,\ldots)$ is said to have the \textit{degeneration property} if the following equivalent conditions are satisfied:
 \begin{enumerate}
   \item There exists a solution   \[\varphi(\xi)=\sum_{i\geq 1} \varphi_i \xi^i\in \mathrm{End}(A)[[\xi]]\] 
to the \textit{gauge equation}
\[e^{\varphi(\xi)} d e^{-\varphi(\xi)}=d+\Delta_1 \xi+\Delta_2 \xi^2+\cdots.\] 
  \item There is a deformation retract $(\iota,\rho,h)$ for $(A,d)$ such that $\Delta_i'=0$ on $H(A)$, for all $i\geq 1$.
  \item Any deformation retract $(\iota,\rho,h)$ for $(A,d)$ satisfies $\Delta_i'=0$ on $H(A)$, for all $i\geq 1$.
  \item The spectral sequence associated to the multicomplex degenerates at the first stage.
 \end{enumerate}
\end{defprop}

An expression for a solution $\varphi(\xi)$ to the gauge equation in terms of a deformation retract $(\rho,\iota,h)$ is given in \cite{DSV} for the case of bicomplexes.
Following the same strategy, we give below a formula in the case of multicomplexes. In order to not unnerve the reader, we point out that only the first-order term's explicit formula,
\[
\varphi_1 = h \Delta_1 - \iota \rho \Delta_1 h,
\]
is needed in the remainder of the paper, when we will compute examples of hypercommutative operations on nilmanifolds.

\begin{prop}  \label{prop: defretrTOpsi}
Let $(A,d,\Delta_1,\Delta_2,\ldots)$ be a multicomplex satisfying the degeneration property and let  $(\iota,\rho,h)$ be a deformation retract.
A solution to the gauge equation is given by
\[\varphi(\xi)=-\log\left(1-
\sum_{n\geq 1}
h\Delta_n\xi^n
+
\sum_{\substack{n\geq 1\\j_1+\cdots+j_k=n\\ j_i\geq 1}}(\iota  \rho \Delta_{j_1} h\Delta_{j_2}h\cdots \Delta_{j_k}h)\xi^n
\right).\]
\end{prop}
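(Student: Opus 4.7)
Setting $g(\xi) := e^{-\varphi(\xi)}$, the gauge equation is equivalent to the operator identity $d\,g = g\,D$, where $D := d + \Delta'$ and $\Delta' := \sum_{n\geq 1}\Delta_n\xi^n$. Expanding the claimed formula for $\varphi$ gives
\[
g = 1 - h\Delta' + \iota\rho\bigl[(1-\Delta' h)^{-1} - 1\bigr],
\]
and using the homotopy identity $\iota\rho - 1 = dh + hd$ one rewrites this in the compact form
\[
g = -dh - hD + \iota\rho R,\qquad R := (1-\Delta' h)^{-1}.
\]
The first step is to verify this reformulation, which is a direct algebraic manipulation that only uses $\iota\rho-1=dh+hd$.

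With $g$ in this form, $d^2 = 0$ together with $d\iota = 0$ yields $dg = -dh\,D$, while $D^2 = 0$ (equivalent to the multicomplex relations $\sum_{i=0}^n \Delta_i\Delta_{n-i}=0$) yields $gD = -dh\,D + \iota\rho\,RD$. Subtracting, the gauge equation collapses to the single identity
\[
\iota\rho\,R\,D = 0.
\]

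To establish this, I expand $\rho R D = \rho R d + \rho R\Delta'$, using $\rho d = 0$ and $hd = \iota\rho - 1 - dh$ to simplify, arriving at
\[
\rho R d = \rho R\Delta'\iota\rho - \rho R\Delta' - \rho R\Delta'\,dh.
\]
The degeneration property, repackaged as the generating-function identity $\rho R\Delta'\iota = 0$ (summing all $\Delta'_n\xi^n$), kills the first term, leaving the reduction to $\rho R\Delta'\,dh = 0$.

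This last vanishing is the main obstacle, since the degeneration hypothesis only gives vanishing when composed with $\iota$ on the right, whereas the required identity does not feature such an $\iota$. My manoeuvre is as follows: substituting $\Delta' d = -d\Delta' - (\Delta')^2$ (another form of $D^2 = 0$) together with the expression above for $\rho R d$ produces, after a short manipulation, the self-referential identity
\[
\rho R\Delta'\,d = \rho R\Delta'\,dh\,\Delta'.
\]
Composing with $h$ on the right, $X := \rho R\Delta'\,dh$ satisfies $X = X\Delta' h$, i.e. $X(1-\Delta' h) = 0$; since $1-\Delta' h$ is invertible in $\End(A)[[\xi]]$ with inverse $R$, this forces $X = 0$, completing the verification. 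The self-referential relation together with formal invertibility of $1-\Delta' h$ is precisely what bridges the gap between the degeneration hypothesis and the identity needed.
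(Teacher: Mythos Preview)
Your proof is correct and more self-contained than the paper's. Both arguments rest on the same reformulation: writing $g = e^{-\varphi}$, the gauge equation becomes $dg = gD$, and both proofs effectively decompose $g$ into the same two pieces. In the paper's language these are $g = \iota\rho_\infty + q_\infty$, where $\rho_\infty = \rho R$ is the $\infty$-extension of $\rho$ supplied by homotopy transfer theory of multicomplexes (quoted from \cite{DSV}) and $q_\infty$ is the explicit $\infty$-morphism with $q_0 = 1-\iota\rho$, $q_n = -h\Delta_n$; in your notation this is exactly your compact form $g = \iota\rho R + (-dh - hD)$. The paper then finishes in one stroke: $q_\infty$ being an $\infty$-morphism gives $dq_\infty = q_\infty D$, while degeneration means the transferred structure on $H(A)$ is trivial, so $\rho_\infty$ is an $\infty$-morphism to the zero multicomplex and hence $\rho_\infty D = 0$; summing gives $dg = gD$. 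You instead verify the crucial identity $\rho R D = 0$ directly, and your bootstrap manoeuvre---deriving $X = X\Delta' h$ for $X = \rho R\Delta'\,dh$ and using invertibility of $1-\Delta' h$ in $\End(A)[[\xi]]$---is precisely a hands-on proof of the transfer statement the paper cites. The paper's route buys brevity and a structural explanation of why the formula splits as it does; your route buys complete self-containment, with no appeal to external transfer machinery.
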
 

\begin{proof}A solution to the gauge equation of the form
\[\varphi(\xi)=-\log(1+\sum_{n\geq 1}f_n\xi^n)\]  is equivalent to a collection of maps
$f=\{f_n:A\to A\}$ for $n\geq 1$
satisfying
\[[d,f_n]=\sum_{i+j=n; i,j>0}f_{i}\Delta_j.\]
In turn, this is equivalent to having an $\infty$-isotopy of multicomplexes 
$f:(A,d,\Delta_i)\rightsquigarrow (A,d,0)$ (so $f$ is an $\infty$-morphism with $f_0=1$).
By homotopy transfer theory of multicomplexes (see for instance Proposition 1.4 of \cite{DSV}), the deformation retract $(\iota,\rho,h)$ gives an $\infty$-morphism
$\rho_\infty:(A,d,\Delta_i)\rightsquigarrow (H(A),0,0)$ by letting
\[\rho_0=\rho\text{ and }\rho_n=\sum_{\substack{j_1+\cdots+j_k=n\\ j_i\geq 1}} 
\rho \Delta_{j_1} h\Delta_{j_2}h\cdots \Delta_{j_k} h\text{ for }n\geq 1.\]
Also, we have an $\infty$-quasi-isomorphism $q_\infty:(A,d,\Delta_i)\rightsquigarrow (A,d,0)$ given by
\[q_0=1-\iota\rho\text{ and }q_n=-h\Delta_n\text{ for }n\geq 1.\]
We then obtain the required $\infty$-isotopy by letting $f=\iota \rho_\infty+q_\infty$.
\end{proof}

\subsection{Batalin-Vilkovisky and hypercommutative algebras}
We first recall the definition of the algebraic order of linear operators on a graded commutative unital algebra $(A,\wedge)$, due to Grothendieck \cite{Gro}. Let $\mathcal{D}_0^k$ be the space of $A$-linear endomorphisms of $A$ of degree $k$, which is isomorphic to $A^k$ as the space of (left) multiplication operators, $a \mapsto L_a$.
Inductively, the degree $k$ operators of algebraic order $\leq r$,  denoted $\mathcal{D}_r^k$, is the space of linear homomorphisms $f$ of degree $k$ such that $[L_a ,f] \in \mathcal{D}_{r-1}^{k+j}$ for all $L_a \in \mathcal{D}_0^j$.  The bracket always indicates the graded-commutator, 
\[[f_1 ,f_2] = f_1 f_2 - (-1)^{|f_1|\cdot |f_2|} f_2 f_1.\] 

Using the fact that $(\End(A), \circ, [\,,\,])$ is a graded Poisson algebra, i.e. the bracket satisfies the Jacobi identity and $[f_1,-]$ is a derivation of composition, it is immediate to show that
\[
\mathcal{D}_r^k \cdot \mathcal{D}_s^\ell \subset \mathcal{D}_{r+s}^{k+\ell},
\quad \quad \quad 
[\mathcal{D}_r^k , \mathcal{D}_s^\ell ] \subset \mathcal{D}_{r+s-1}^{k+\ell}.
\]

\begin{defn} \label{commBVinf}  A \textit{commutative $\BV_\infty$-algebra} is a tuple $(A, \wedge, d, \Delta_1, \Delta_2 ,\ldots)$ such that
$(A,\wedge,d)$ is a commutative dg-algebra,  $(A, d, \Delta_1, \Delta_2 ,\ldots)$ is a multicomplex, and each $\Delta_n$ has  order $\leq n+1$. 
\end{defn}

Note that (differential graded) $\BV$-algebras are precisely those commutative $\BV_\infty$-algebras such that $\Delta_n=0$ for all $n\geq 2$. 

Let $c\BV_\infty$ denote the operad governing commutative $\BV_\infty$-algebras. The obvious map of operads  $c\BV_\infty\to \BV$ is a quasi-isomorphism. One may view $c\BV_\infty$ as a partial cofibrant resolution of $\BV$, where only the operation $\Delta$ is resolved (but not the commutative structure). Algebras over a cofibrant (resp. minimal) resolution of $\BV$ are studied in \cite{GTV} and \cite{DCV}, respectively, and are called $\BV_\infty$ (resp. skeletal $\BV_\infty$) algebras.
For our purposes, it will suffice to work with the more restrictive notion of commutative $\BV_\infty$-algebra introduced above.

The last ingredient required to state the main result of this section are hypercommutative algebras. These are defined as algebras over the operad 
\[\Hycom(n):= H_*(\overline{\mathcal{M}}_{0,n+1}),\] the Deligne-Mumford-Knudsen
moduli space of stable genus 0 curves. The following algebraic description appears in \cite{Getzler}. 

\begin{defn}
A \textit{hypercommutative algebra} is a cochain complex $(A,d)$ with a sequence of
graded symmetric $n$-ary operations
\[m_n:A^{\otimes n}\to A\] of degree $2(2-n)$, compatible with the differential and
satisfying the following generalized associativity condition:
for all $n\geq 0$ and $a,b,c,x_j\in A$, 
\[\sum_{S_1\cup S_2=\{1,\cdots,n\}} \pm m_*(m_*(a,b,x_{S_1}),c,x_{S_2})=
\sum_{S_1\cup S_2=\{1,\cdots,n\}} \pm m_*(a,m_*(b,c,x_{S_1}),x_{S_2}),
\]
where $\pm$ is the Koszul sign rule and $x_S$ denotes $x_{s_1},\cdots,x_{s_m}$ for 
$S=\{s_1,\cdots,s_m\}$.
\end{defn}

The operad $\Hycom$ is Koszul, with Koszul dual
cooperad $H^{*+1}({\mathcal{M}}_{0,*+1})$, so that the cobar construction yields a cofibrant resolution
\[\Omega H^{*+1}({\mathcal{M}}_{0,*+1}) \to H_*(\overline{\mathcal{M}}_{0,*+1}).\]
By definition then, a homotopy hypercommutative algebra is an algebra over the operad
\[\Hycom_\infty:=\Omega H^{*+1}({\mathcal{M}}_{0,*+1}).\]

The following result can be found in Proposition 10 of \cite{DSV3} (see also Proposition 4.4 of \cite{DSV}).
The proofs therein use homotopy transfer techniques. We give below an alternative proof which follows from a quasi-isomorphism of operads $\Hycom\to \BV/\Delta$, given by Khoroshkin, Markarian and Shadrin in \cite{KMS}. Here
$\BV/\Delta$ is a model of the homotopy quotient of $\BV$ by $\Delta$ and will be explained in the proof.

\begin{theo}\label{hycomprop}
Let $(A, \wedge, d, \Delta_1, \Delta_2 ,\ldots)$ be a commutative $\BV_\infty$-algebra. Any solution 
 \[
 \varphi(\xi)=\sum_{i\geq 1} \varphi_i \xi^i\in \mathrm{End}(A)[[\xi]]
 \] 
 to the  gauge equation 
\[
e^{\varphi(\xi)} d e^{-\varphi(\xi)}=d+\Delta_1 \xi+\Delta_2 \xi^2+\cdots
\]
defines a unique homotopy hypercommutative algebra structure on $(A,d)$ extending the commutative product and such that its homotopy type determines the homotopy type of the initial commutative $\BV_\infty$-algebra.
\end{theo}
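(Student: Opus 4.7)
The plan is to exploit the operad quasi-isomorphism $\kappa\colon \Hycom \xrightarrow{\sim} \BV/\Delta$ constructed in \cite{KMS}, where $\BV/\Delta$ is a cofibrant model for the homotopy quotient of the operad $\BV$ by its unary BV-operator $\Delta$, obtained by formally adjoining a generator $\xi$ of cohomological degree $+2$ paired with $\Delta$, together with the higher coherences required for cofibrancy. The same construction, applied to $c\BV_\infty$ in place of $\BV$, produces a cofibrant operad $c\BV_\infty/\Delta$ together with a quasi-isomorphism $c\BV_\infty/\Delta \xrightarrow{\sim} \BV/\Delta$ induced by the operadic quasi-isomorphism $c\BV_\infty \xrightarrow{\sim} \BV$ recalled above.

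The first step is to reinterpret a commutative $\BV_\infty$-algebra equipped with a solution $\varphi(\xi)\in \End(A)[[\xi]]$ to the gauge equation as an algebra over $c\BV_\infty/\Delta$. Unraveling the cobar-style definition of the latter, such a structure on $(A,\wedge,d)$ amounts exactly to a commutative $\BV_\infty$-structure together with a Maurer-Cartan element whose defining equation is precisely
\[
e^{\varphi(\xi)} d e^{-\varphi(\xi)} = d + \Delta_1\xi + \Delta_2\xi^2 + \cdots ,
\]
with the exponential conjugation encoding the action of $\varphi$ on the $\xi$-generators under the cobar differential.

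The second step is to use cofibrancy of $\Hycom_\infty = \Omega H^{*+1}(\mathcal{M}_{0,*+1})$ to lift the composite
\[
\Hycom_\infty \xrightarrow{\sim} \Hycom \xrightarrow{\kappa} \BV/\Delta
\]
through the acyclic fibration $c\BV_\infty/\Delta \xrightarrow{\sim} \BV/\Delta$, producing a homotopically unique operad map $\Hycom_\infty \to c\BV_\infty/\Delta$. Pulling back the $c\BV_\infty/\Delta$-structure on $A$ along this map equips $(A,d)$ with a homotopy hypercommutative algebra structure, and since $\kappa$ is the identity on the commutative suboperad up to homotopy, this structure extends the commutative product $\wedge$. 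For uniqueness and the homotopy-type statement, I would invoke two standard facts: lifts from a cofibrant operad are unique up to homotopy, and any two solutions of the gauge equation differ by a gauge transformation, which induces an equivalence of $c\BV_\infty/\Delta$-algebras and hence of the resulting homotopy hypercommutative structures. Conversely, forgetting $\xi$ returns the underlying commutative $\BV_\infty$-data, so the correspondence is faithful on homotopy types.

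The main obstacle I anticipate is the precise identification in the first step: matching the exponential form of the gauge equation with the Maurer-Cartan equation arising from the cobar differential on $c\BV_\infty/\Delta$ requires careful bookkeeping of the generators of the resolution and of the combinatorics of how $\varphi(\xi)$ interacts with them under iterated conjugation. Once this dictionary is firmly in place, the remaining lifting and uniqueness arguments are standard applications of the model structure on operadic algebras, essentially as in \cite{KMS} and \cite{DSV}.
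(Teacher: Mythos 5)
Your proposal follows essentially the same route as the paper's proof: reinterpret the gauge data as an algebra over a model of the homotopy quotient of $c\BV_\infty$ by the operators $\{\Delta_i\}$ (obtained by adjoining arity-one generators $\varphi_i$ of degree $-2i$ --- note: not $+2$ --- whose cobar-style differential encodes the gauge equation), then lift the Khoroshkin--Markarian--Shadrin quasi-isomorphism $\Hycom\to\BV/\Delta$ through the surjective quasi-isomorphism from this homotopy quotient, using cofibrancy of $\Hycom_\infty$ and homotopy uniqueness of lifts. One caveat: your closing claim that any two solutions of the gauge equation differ by a gauge transformation inducing an equivalence of the resulting hypercommutative structures is both unnecessary and false --- the uniqueness in the theorem is only of the structure attached to a \emph{fixed} solution, and different solutions genuinely yield inequivalent structures (compare the trivial solution $\varphi(\xi)=\Lambda\xi$ of Proposition \ref{prop;sillychoice}, which trivializes the hypercommutative structure, with the deformation-retract solutions producing a non-zero $m_3$ on cohomology in Section \ref{SectionExamples}).
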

\begin{proof}
A commutative $\BV_\infty$-algebra equipped with the trivialization data $\varphi(\xi)$
is a representation of the homotopy quotient of $c\BV_\infty$ by the operators $\Delta_i$, which we denote by 
$c\BV_\infty/\{\Delta_i\}_{i>0}$. 
A model for this homotopy quotient may be defined as follows: we add, to $c\BV_\infty$, a family of generators $\{\varphi_i\}$ of arity 1 and degree $-2i$. The differential $\delta$ of $\varphi_i$ is defined by writing  $\varphi(\xi)=\sum \varphi_i \xi^i$ and imposing the gauge equation.
This gives the various terms
\[\arraycolsep=4pt\def\arraystretch{1.4}
\begin{array}{l}
\Delta_1=[d,\varphi_1],\\
\Delta_2=[d,\varphi_2]+{1\over 2}[[d,\varphi_1],\varphi_1],\\
\Delta_3=[d,\varphi_3]+[[d,\varphi_1],\varphi_2]+[[d,\varphi_2],\varphi_1]+{1\over 6}[[[d,\varphi_1],\varphi_1],\varphi_1],\\
\cdots
\end{array}
\]
We then let $\delta \varphi_i:=[d,\varphi_i]$. The lower terms are:
\[
\arraycolsep=4pt\def\arraystretch{1.4}
\begin{array}{l}
\delta \varphi_1= \Delta_1,\\
\delta\varphi_2=-{1\over 2}[\Delta_1,\varphi_1]+\Delta_2,\\
\delta\varphi_3=-[\Delta_1,\varphi_2]-[\Delta_2,\varphi_1]+{1\over 3}[[\Delta_1,\varphi_1],\varphi_1]+\Delta_3,\\
\cdots
\end{array}
\]
The map of operads $c\BV_\infty\to \BV$ defined by sending $\Delta_1\mapsto \Delta$ and $\Delta_i\mapsto 0$ for $i>1$ descends to a surjective quasi-isomorphism, $\pi: c\BV_\infty/\{\Delta_i\}_{i>0} \to \BV/\Delta$, of the respective homotopy quotients, where the homotopy quotient $\BV/\Delta$ is defined exactly as above but with $\Delta_i=0$ for $i>1$ (see also \cite{GuMu}). By 
\cite{KMS}, there is a quasi-isomorphism of operads $\theta:\Hycom\to \BV/\Delta$.
Therefore, by the lifting property, the dotted arrow in the diagram
\[
\xymatrix{
&&c\BV_\infty/\{\Delta_i\}_{i>0}\ar@{->>}[d]^\pi\\
\Hycom_\infty\ar[r]\ar@{.>}[rru]^{\theta'}&\Hycom\ar[r]^{\theta}&\BV/\Delta
}
\]
exists and the diagram strictly commutes. Moreover, all maps are quasi-isomorphisms.

Finally, the composition 
\[\BV_\infty \to c\BV_\infty\to c\BV_\infty/\{\Delta_i\}_{i>0}\]
has a lift 
\[
\xymatrix{
&\Hycom_\infty\ar[d]^{\theta'}\\
\BV_\infty\ar@{.>}[ur]\ar[r]&c\BV_\infty/\{\Delta_i\}_{i>0},
}
\]
which makes the diagram commute up to homotopy.
\end{proof}

As is customary, a $c\BV_\infty/\{\Delta_i\}$-algebra is an algebra over the operad $c\BV_\infty/\{\Delta_i\}$.

The following will be useful to make explicit computations on geometric examples.
It follows by combining Proposition \ref{prop: defretrTOpsi} and Theorem \ref{hycomprop}.

\begin{cor}  \label{cor;cBVdefretrTOhh}
Let $(A, \wedge, d, \Delta_1, \Delta_2 ,\ldots)$ be a commutative $\BV_\infty$-algebra whose underlying multicomplex satisfies the degeneration property. Then any deformation retract $(\rho,\iota,h)$ for $(A,d)$
determines a unique  homotopy hypercommutative algebra on $A$. 
The strict arity-3 hypercommutative product induced on $A$ is given by
\[
\arraycolsep=4pt\def\arraystretch{1.4}
\begin{array}{ll}
m_3(a,b,c)=&
\varphi_1(a\wedge b\wedge c)+\varphi_1(a)\wedge b\wedge c +a\wedge \varphi_1(b)\wedge c +a\wedge b\wedge \varphi_1(c)\\
&-\varphi_1(a\wedge b)\wedge c-(-1)^{|b|\cdot|c|}\varphi_1(a\wedge c)\wedge b-a\wedge \varphi_1(b\wedge c).
\end{array}
\]
where $\varphi_1=h\Delta_1-\iota\rho \Delta_1 h$.
\end{cor}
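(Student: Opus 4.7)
The overall plan is to concatenate the two preceding results and then read off the low-order term explicitly. Given a deformation retract $(\rho,\iota,h)$ for $(A,d)$, Proposition \ref{prop: defretrTOpsi} produces an explicit solution $\varphi(\xi)=\sum_{i\geq 1}\varphi_i \xi^i$ of the gauge equation; expanding the logarithm and collecting the coefficient of $\xi$ yields $\varphi_1 = h\Delta_1 - \iota\rho\Delta_1 h$. Plugging $\varphi(\xi)$ into Theorem \ref{hycomprop} endows $(A,\wedge,d)$ with a homotopy hypercommutative algebra structure, uniquely determined by the gauge datum. Uniqueness of the structure attached to the deformation retract is then inherited from the two invoked results.

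It remains to extract the strict arity-3 operation $m_3$. I would trace the operadic zig-zag
\[
\Hycom_\infty \xrightarrow{\theta'} c\BV_\infty/\{\Delta_i\}_{i>0} \xrightarrow{\pi} \BV/\Delta
\]
from the proof of Theorem \ref{hycomprop} at the arity-3 generator of $\Hycom_\infty$, which is a graded symmetric operation of degree $2(2-3)=-2$. The only arity-1 generator of $c\BV_\infty/\{\Delta_i\}_{i>0}$ in degree $-2$ is $\varphi_1$, and the only degree-preserving binary operation is $\wedge$, so the image $\theta'(m_3)$ must be a linear combination of the seven graded-symmetric trilinear expressions in $\varphi_1$ and $\wedge$ that appear in the stated formula. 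The coefficients are fixed by compatibility with $\pi$ together with the quasi-isomorphism $\theta:\Hycom\to \BV/\Delta$ of \cite{KMS}, which sends the arity-3 generator to the Koszul-type trilinear bracket associated with the order-$\leq 2$ operator $\Delta$. Since $\pi(\varphi_1)=\varphi_1$ and $\pi(\Delta_1)=\Delta$, the lift $\theta'(m_3)$ is forced to be the analogous Koszul trilinear bracket of $\varphi_1$, i.e.\ precisely the right-hand side displayed in the corollary.

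The main obstacle is bookkeeping: matching the combinatorics and Koszul signs of the KMS assignment $\theta$ on the arity-3 generator against the seven terms of the Koszul hierarchy of $\varphi_1$. Rather than unpacking the full operadic construction of \cite{KMS}, I would perform a self-contained check at arity 3: verify directly that the stated $m_3$ is graded symmetric of degree $-2$, and compute $[d,m_3]$ to see that it reduces to an expression in $\Delta_1=[d,\varphi_1]$ and $\wedge$ whose class in $\BV/\Delta$ matches the image of the arity-3 hypercommutative relation. Once this is confirmed for the generator, the formula propagates uniquely through $\theta'$, and the corollary follows by composing with $\Hycom_\infty\to \Hycom$.
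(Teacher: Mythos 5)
Your first step coincides with the paper's: the corollary is obtained exactly by feeding the gauge solution of Proposition \ref{prop: defretrTOpsi} (whose linear coefficient is $\varphi_1=h\Delta_1-\iota\rho\Delta_1 h$) into Theorem \ref{hycomprop}, and that part of your argument is fine.

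The gap is in your derivation of the explicit arity-3 formula. The claim that degree and arity bookkeeping force $\theta'(m_3)$ to lie in the span of the seven $\varphi_1$-terms is not correct: in $c\BV_\infty/\{\Delta_i\}_{i>0}$ the space of arity-3, degree $-2$ operations also contains words quadratic in $\Delta_1$ (e.g.\ $\Delta_1(\Delta_1(a\wedge b)\wedge c)$ and its symmetrizations), so neither degree counting nor the single constraint $\pi\circ\theta'=\theta$ pins down the answer; moreover the lift $\theta'$ provided by the lifting property is unique only up to homotopy, so "forced" is too strong even before coefficients are discussed. You also misattribute the source of the formula: the Khoroshkin--Markarian--Shadrin morphism $\theta:\Hycom\to\BV/\Delta$ does not send the arity-3 generator to a Koszul-type bracket of $\Delta$ (which would vanish, $\Delta$ being of order $\leq 2$), but to the corresponding expression in the adjoined trivialization generator of $\BV/\Delta$; it is precisely this explicit value of $\theta$ on the arity-3 generator, evaluated in the representation where the trivialization acts by $\varphi_1$, that yields the seven-term formula. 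Your fallback check (graded symmetry plus computing $[d,m_3]$) only verifies that the displayed operation is a consistent candidate compatible with the differential; it does not identify it as the operation produced by the construction of Theorem \ref{hycomprop}, which is the actual content being asserted. So to close the argument you must invoke (or reprove) the explicit arity-3 formula for the $\Hycom\to\BV/\Delta$ quasi-isomorphism from \cite{KMS} (equivalently, the transfer formulas of \cite{DSV}), rather than deduce it from degree constraints.
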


Note that the operation $m_3$ is a strict hypercommutative operation, rather than a higher homotopy operation. In particular, whenever $m_3$ is non-zero in cohomology, the homotopy hypercommutative structure induced on $H(A)$ is not $\infty$-isomorphic to a homotopy commutative algebra. On the other hand, an 
initial commutative $\BV_\infty$-algebra may be $\infty$-isomorphic to a homotopy commutative algebra, or even a commutative dg-algebra, as in the geometric cases below, but still admit non-trivial hypercommutative structures.
We state a general fact to this effect here.

\begin{prop} \label{prop;sillychoice}
Let $(A,\wedge,d)$ be a commutative dg-algebra and suppose $\Lambda\in\End(A)$  
is an operator of degree $-2$ and order $\leq 2$. For all $k\geq 1$, let 
\[\Delta_k={1\over k!}\underbrace{[\Lambda,[\Lambda,\cdots[\Lambda}_{k},d]]].\]
Then:
\begin{enumerate}
\item The tuple $(A,\wedge, d,\Delta_i)$ is a commutative $\BV_\infty$-algebra,
which is quasi-isomorphic to the commutative dg-algebra $(A,\wedge,d)$ as commutative $\BV_\infty$-algebras.
\item The choice of gauge solution $\varphi(\xi)=\Lambda \xi$ yields a homotopy hypercommutative algebra which is quasi-isomorphic to its underlying homotopy commutative algebra.
\end{enumerate}
\end{prop}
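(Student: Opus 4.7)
For part (1), the crux is the formal identity
\[
e^{\Lambda\xi}\, d\, e^{-\Lambda\xi} \;=\; \sum_{k \geq 0} \tfrac{1}{k!}\,\mathrm{ad}_\Lambda^k(d)\,\xi^k \;=\; d + \sum_{k \geq 1}\Delta_k \xi^k
\]
in $\End(A)[[\xi]]$. Squaring both sides and using $d^2=0$ yields, upon comparing coefficients of $\xi^n$, the multicomplex relation $\sum_{i+j=n}\Delta_i\Delta_j=0$. For the order bound, I would use that $d$ is a derivation so $d \in \mathcal{D}_1$; combined with $\Lambda \in \mathcal{D}_2$ and the iterated estimate $[\mathcal{D}_r, \mathcal{D}_s] \subset \mathcal{D}_{r+s-1}$, a short induction gives $\mathrm{ad}_\Lambda^k(d) \in \mathcal{D}_{k+1}$, so each $\Delta_k$ has order $\leq k+1$ as required. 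The same identity moreover exhibits $\varphi(\xi) = \Lambda\xi$ as a tautological solution of the gauge equation, so the underlying multicomplex satisfies the degeneration property, and the power series $e^{\Lambda\xi}$ realizes an $\infty$-isomorphism between the multicomplex $(A, d, \Delta_i)$ and its trivial counterpart $(A, d, 0, \ldots)$. Combined with the strict preservation of the common wedge product, this provides the required quasi-isomorphism between the commutative $\BV_\infty$-algebra $(A, \wedge, d, \Delta_i)$ and the canonical one on the cdga $(A, \wedge, d)$ with vanishing $\Delta_i$'s.

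For part (2), I would apply Theorem~\ref{hycomprop} with the gauge solution $\varphi(\xi) = \Lambda\xi$ to obtain a homotopy hypercommutative algebra on $(A, d)$ extending the product $\wedge$, whose underlying homotopy commutative algebra is by construction the cdga $(A, \wedge, d)$. The stated quasi-isomorphism then follows from the homotopical naturality of the construction: the $\infty$-isomorphism $e^{\Lambda\xi}$ from part~(1) identifies our trivialized $c\BV_\infty/\{\Delta_i\}$-algebra with the one obtained from $(A, \wedge, d, 0, \ldots)$ equipped with the trivial gauge $\varphi = 0$, and composing with the operadic lift $\theta': \Hycom_\infty \to c\BV_\infty/\{\Delta_i\}$ of Theorem~\ref{hycomprop} transports this to an $\infty$-quasi-isomorphism of $\Hycom_\infty$-algebras. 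The $\Hycom_\infty$-structure coming from the trivial source is precisely the cdga viewed as a trivial homotopy commutative algebra, with all higher hypercommutative operations identically zero, as expected.

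The main obstacle will be making this last transfer of structure fully precise: one must verify that gauge-equivalent trivializations induce $\infty$-quasi-isomorphic representations of $c\BV_\infty/\{\Delta_i\}$ and that the operadic lift $\theta'$ respects this equivalence. This should reduce to a standard property of the specific model for the homotopy quotient employed in the proof of Theorem~\ref{hycomprop}: two trivializations of the $\Delta_i$'s that differ by a coboundary in the formal $\varphi$-generators yield $\infty$-isomorphic $c\BV_\infty/\{\Delta_i\}$-structures, and by composition with $\theta'$ this descends to $\infty$-isomorphic $\Hycom_\infty$-structures, completing the argument.
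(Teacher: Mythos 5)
Your treatment of the first half of (1) does match the paper's argument: expanding $e^{\La\xi}\,d\,e^{-\La\xi}=d+\sum_{k\geq 1}\Delta_k\xi^k$ and squaring gives the multicomplex relations, and the induction from $d\in\mathcal{D}_1$, $\La\in\mathcal{D}_2$ and $[\mathcal{D}_r,\mathcal{D}_s]\subset\mathcal{D}_{r+s-1}$ gives the order bounds. The gap lies in both quasi-isomorphism claims. You propose to use $e^{\La\xi}$ itself as the equivalence between $(A,\wedge,d,\Delta_i)$ and $(A,\wedge,d,0)$, ``combined with the strict preservation of the common wedge product''. But the Taylor coefficients of $e^{\La\xi}$ are the operators $\La^k/k!$, and $\La$ has order $\leq 2$: it is not a derivation and its exponential is not an algebra map, so $e^{\La\xi}$ is only an $\infty$-isotopy of the underlying multicomplexes. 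Having the same product on source and target does not upgrade a multicomplex morphism to a morphism (strict or $\infty$-) of commutative $\BV_\infty$-algebras, and nothing in your argument supplies that compatibility. Note that your mechanism would prove far too much: any commutative $\BV_\infty$-algebra with the degeneration property admits a gauge solution $\varphi(\xi)$, i.e.\ an $\infty$-isotopy $e^{\varphi(\xi)}$ of multicomplexes onto $(A,d,0)$ with the same product, so by your reasoning every induced homotopy hypercommutative algebra would be quasi-isomorphic to its underlying homotopy commutative one --- contradicting the nonzero operations $m_3$ on cohomology computed in Section~\ref{SectionExamples}, which by the remark after Corollary~\ref{cor;cBVdefretrTOhh} certify non-triviality. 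The same defect undermines your part (2): the trivialized $c\BV_\infty/\{\Delta_i\}$-algebras $(A,d,\Delta_i;\varphi=\La\xi)$ and $(A,d,0;\varphi=0)$ cannot be identified by $e^{\La\xi}$ for lack of compatibility with $\wedge$, and your closing suggestion about trivializations differing by a coboundary concerns two gauges for the same operators $\Delta_i$, which is not the comparison needed here.

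The paper's proof supplies exactly the missing ingredient via a path-object argument producing strict quasi-isomorphisms. Set $\La_t:=\La\otimes t\in\End(A(t,dt))$; it is again of degree $-2$ and order $\leq 2$, so by the (correct) first half of your argument it generates a commutative $\BV_\infty$-structure $\Delta(t)_k$ on the cdga $(A(t,dt),\wedge,d)$, together with the gauge solution $\La_t\xi$. The evaluation maps $\delta^1,\delta^0:A(t,dt)\to A$ are cdga quasi-isomorphisms satisfying $\delta^1\Delta(t)_k=\Delta_k\delta^1$, $\delta^0\Delta(t)_k=0$ and $\delta^k\La_t=k\La\,\delta^k$, hence they form a zig-zag of strict quasi-isomorphisms of commutative $\BV_\infty$-algebras, and indeed of $c\BV_\infty/\{\Delta_i\}$-algebras, from $(A,\wedge,d,\Delta_i)$ with gauge $\La\xi$ through $(A(t,dt),\wedge,d,\Delta(t)_i)$ to $(A,\wedge,d,0)$ with trivial gauge. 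This zig-zag proves (1), and restricting the structures along $\theta':\Hycom_\infty\to c\BV_\infty/\{\Delta_i\}$ from Theorem~\ref{hycomprop} proves (2). It is here, not in your formal identification by $e^{\La\xi}$, that the hypothesis that $\La$ is a single order-$\leq 2$ operator (so that $\La_t$ makes sense and has the same properties) is actually used.
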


\begin{proof}
The fact that the $(A, d,\Delta_i)$ is a multicomplex follows from the identity
\[\mathbb{D}:=e^{\Lambda\xi}de^{-\Lambda \xi}=Ad_{e^{\La \xi}}(d) = e^{ad_{{\La \xi }}} (d)=\sum_{k\geq 0}{1\over k!} (ad_{\La \xi })^k d=\sum_{k\geq 0}{[\Lambda,-]^k\xi^k\over k!}  d=d+\sum_{k\geq 1} \Delta_k\xi^k.\]
Indeed, $\mathbb{D}^2=0$ is the multicomplex condition.
Since $d$ has order $\leq 1$ and $\Lambda$ has order $\leq 2$, one inductively shows that $\Delta_i$ has order $\leq i+1$, using that the commutator of operators lowers the order by at least 1. This proves that $(A,\wedge, d,\Delta_i)$ is a commutative $\BV_\infty$-algebra.

Consider now the commutative dg-algebra $(A(t,dt),\wedge,d)$ and define 
$\Lambda_t:=\Lambda\otimes t \in\End(A(t,dt))$, so that
\[\Lambda_t(\sum a_it^i+b_it^i dt):=\sum \Lambda(a_i)t^{i+1}+\Lambda(b_i)t^{i+1}dt.\]
This is again an operator of degree $-2$ and order $\leq 2$ and so the same argument as before 
shows that $(A(t,dt),\wedge,d,\Delta(t)_i)$ is a 
commutative $\BV_\infty$-algebra 
with 
\[\Delta(t)_k={1\over k!}\underbrace{[\Lambda_t,[\Lambda_t,\cdots[\Lambda_t}_{k},d]]].\]
The evaluation map $\delta^\lambda:A(t,dt)\to A$ given by $t\mapsto \lambda$ and $dt\mapsto 0$ satisfies 
$\delta^1\Delta(t)_k=\Delta_k$ and $\delta^0\Delta(t)_k=0$ for all $k\geq 1$.
Therefore we obtain quasi-isomorphisms of commutative $\BV_\infty$-algebras
\[(A,\wedge,d,\Delta_i)\stackrel{\delta^1}{\longleftarrow} (A(t,dt),\wedge,d,\Delta(t)_i)\stackrel{\delta^0}{\longrightarrow} (A,\wedge,d,0).\]
This proves the first claim.

The gauge solution $\varphi(\xi)=\Lambda_t \xi$ gives a $\BV_\infty/\{\Delta_i\}$-algebra structure 
on $A(t,dt)$ with $\varphi_1=\Lambda_t$ and $\varphi_i=0$ for $i>0$. Since $\delta^k\Lambda_t=\Lambda_k\delta^k$, the above string of quasi-isomorphisms promotes to a string of quasi-isomorphisms of  $\BV_\infty/\{\Delta_i\}$-algebras. Therefore the corresponding homotopy hypercommutative algebras are also 
quasi-isomorphic.
\end{proof}

 \begin{rmk}
The above result recovers in particular the hypercommutative triviality results of Guan and Muro \cite{GuMu} for Poisson and Jacobi manifolds.
In the geometric examples below, the first case of Proposition \ref{prop;sillychoice} will always occur, and so we will not choose the trivial gauge solution mentioned in the second case.
Rather, in the geometric situations throughout this paper, the gauge solutions will be obtained from deformation retracts that are induced canonically by the underlying structure, e.g. the choice of metric.
\end{rmk}

\section{Hermitian manifolds}\label{HermitianSec}
Let $(M,J, g)$ be a Hermitian $2n$-manifold (not necessarily compact) with fundamental $(1,1)$-form $\omega(X,Y)= g(JX,Y)$.  We will denote by $\Aa=\Aa_{\dR}(M)$ its de Rham algebra and by $\Aa_\CC=\Aa\otimes_\RR\CC$ its complexification. 
The almost complex structure $J:TM\to TM$ induces a bi-degree decomposition by forms of type $(p,q)$
\[
\Aa^k_\CC=\bigoplus_{p+q=k} \Aa^{p,q},
\]
and the exterior differential $d$ decomposes into two components $d=\del+\delb$, where $\del$ has bidegree $(1,0)$ and $\delb$ is its complex conjugate, of bidegree $(0,1)$.

Let $d^*:=-\star d \star$ be the formal adjoint of $d$ with respect the Hodge-star operator $\star: \cA^{k}_\CC \to \cA^{2n-k}_\CC$
which satisfies $\star^2= (-1)^k$ on $\cA^k$.
Consider the real differential operator 
\[d_c:=J^{-1} \circ d \circ J=i(\delb-\del)\]
and denote
 by $d_c^*:=-\star d_c \star$ its formal adjoint. It
satisfies \[d_c^*=i(\del^*-\delb^*),\]
where $\del^*:= -\star \delb \star$ and $\delb^*:=-\star \del \star$ are the formal adjoints to $\del$ and $\delb$, respectively.

Recall there is the $\mathfrak{sl}_2$-representation $\{L,\La, H = [L,\La]\}$ on $\cA$, where $L \eta := \omega \wedge \eta$ is the Lefschetz operator,
$\La:=L^* = \star^{-1} L \star$ is the metric adjoint of $L$, and $H$ acts on $\cA^k$ by $(k-n)$, where $n$ is the complex dimension of $M$. We will also consider the operators
 \[\la := [\del, L]=\del w\quad\text{ and }\quad\tau := [\La, \la]\]
 of bidegrees $(2,1)$ and $(1,0)$ respectively, as well as their adjoints
 $\la^*$ of bidgree $(-2,-1)$ and  $\tau^*$ of bidegree $(-1,0)$.

For a K\"ahler manifold we have $\la = \tau = 0$, and the tuples
\[ 
 (\Aa_\CC, \wedge, \Delta_0=\delb, \Delta_1=-i \del^*)
 \quad \text{ and }\quad
(\Aa, \wedge, \Delta_0=d, \Delta_1=-d_c^*)
\] 
are commutative differential graded $\BV$-algebras over $\CC$ and $\RR$, respectively (see \cite{Koszul} and \cite{CaZh}), and both satisfy the degeneration property (in fact, they satisfy the stronger so-called $\Delta_0\Delta_1$-condition).
Note,  in both cases, the fundamental K\"ahler identity \[[\La,\delb] = -i \del^*\] is needed for the bicomplex relation $\Delta_0\Delta_1+\Delta_1 \Delta_0=0$ to hold.

 For Hermitian manifolds,
 the operators $\delb$ and $\del^*$ do not anti-commute, but the  fundamental Hermitian identity 
  \[
[\La, \delb] = -i(\del^* + \tau^*),
\]
 due to Demailly \cite{De}, generalizes the K\"ahler  identity above,
 and is proved by a local calculation. We remark that Demailly used this relation to prove several 
 vanishing theorems for Hermitian vector bundles \cite{De2}, and more recently it established a Lefschetz duality for Hermitian manifolds \cite{SWH}. With this identity at hand, we have:
 
\begin{theo}\label{mainHermitian}
For any Hermitian manifold the tuple
\[
(\Aa_\CC, \wedge, \, \Delta_0=\delb \, , \, \Delta_1=-i( \del^* + \tau^*) \, ,\, \Delta_2= i\la^* \,)
\]
is a commutative $\BV_\infty$-algebra. The total differential 
\[
\mathbb{D} =  \delb -i \left(\del^* + \tau^*\right)\xi  + i\la^*\xi^2 \in \End(\Aa_\CC)[[\xi]]
\]
 satisfies
\[
\mathbb{D} =  e^{\La \xi} \delb e^{- \La \xi}.
\]
\end{theo}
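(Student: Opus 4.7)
The plan is to invoke Proposition \ref{prop;sillychoice} applied to the commutative dg-algebra $(\Aa_\CC, \wedge, \delb)$ with $\Lambda := \La$. Once the hypotheses are verified, the formula $\mathbb{D} = e^{\La\xi}\delb e^{-\La\xi}$ is part of the conclusion of that proposition, so the entire theorem reduces to checking that (i) $\La$ has degree $-2$ and algebraic order $\leq 2$, and (ii) the iterated brackets $\Delta_k := \tfrac{1}{k!}[\La,[\La,\cdots [\La,\delb]]]$ match the expressions in the statement and vanish for $k \geq 3$. The degree is clear from the bidegree $(-1,-1)$, and for the order one notes that in any local unitary frame $\La$ is a sum of products of two interior contractions, each a graded derivation of $\wedge$ (hence of order $\leq 1$), so $\La$ has order $\leq 2$.

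For $\Delta_1 = [\La, \delb]$ I would simply cite Demailly's Hermitian identity $[\La, \delb] = -i(\del^* + \tau^*)$ recalled just before the theorem. For $\Delta_2 = -\tfrac{i}{2}[\La, \del^* + \tau^*]$ I would split the computation. First, $[\La, \del^*] = \la^*$: this is the Hermitian adjoint of the defining identity $\la = [\del, L]$, using $\La^* = L$ together with the general rule $[A,B]^* = [B^*, A^*]$. Second, $[\La, \tau^*]$: writing $\tau^* = -[L, \la^*]$ (adjoint of $\tau = [\La, \la]$) and applying the graded Jacobi identity (the sign $(-1)^{|\La||L|}$ being $+1$),
\[
[\La, [L, \la^*]] = [[\La, L], \la^*] + [L, [\La, \la^*]].
\]
The first summand equals $[-H, \la^*] = 3\la^*$, since $H$ acts on $\Aa^k$ by $k-n$ and $\la^*$ lowers total degree by $3$. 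The second summand vanishes by the keystone identity $[\La, \la^*] = 0$, whose adjoint $[\la, L]$ is zero because $\la$ is wedging with the $3$-form $\del\omega$ and $L$ is wedging with the $2$-form $\omega$, and these two even-parity wedge operators commute. Combining, $[\La, \del^* + \tau^*] = \la^* - 3\la^* = -2\la^*$, giving $\Delta_2 = i\la^*$ as desired.

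Finally, $\Delta_3 = \tfrac{1}{3}[\La, \Delta_2] = \tfrac{i}{3}[\La, \la^*] = 0$, and inductively all higher $\Delta_k$ vanish, so $\mathbb{D}$ truncates at order $\xi^2$ as claimed. The main obstacle is the careful bookkeeping of graded signs and of the $H$-eigenvalue in the Jacobi expansion that computes $[\La, \tau^*]$; apart from those computations everything is either an application of Demailly's identity or a formal consequence of Proposition \ref{prop;sillychoice}, which in particular guarantees a posteriori the order bounds $\Delta_k \in \mathcal{D}_{k+1}$ required for a commutative $\BV_\infty$-algebra.
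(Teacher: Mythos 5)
Your proposal is correct and follows essentially the same route as the paper: reduce to Proposition \ref{prop;sillychoice}, identify $\Delta_1$ with Demailly's identity, compute $\Delta_2$ from $[\La,\del^*]=\la^*$ together with a Jacobi-identity evaluation of $[\La,\tau^*]=-3\la^*$ using $[\La,\la^*]=[\la,L]^*=0$ and the $H$-eigenvalue, and observe $\Delta_3=0$. The only (harmless) addition is your explicit local-frame verification that $\La$ has algebraic order $\leq 2$, which the paper leaves implicit.
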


\begin{proof}

By (1) of Proposition \ref{prop;sillychoice} it suffices to show that 
\[
\Delta_1= -i(\del^*+\tau^*), \quad
 \Delta_2 = i\lambda^*, \quad
 \Delta_3=0.
\]
The first equation is Demailly's identity, $[\Lambda,\delb]=-i(\del^*+\tau^*)$, and for the second we compute
 \[
 \Delta_2={1\over 2}[\Lambda,[\Lambda,\delb]]=-{i\over 2}([\Lambda,\del^*]+[\Lambda,\tau^*]).
 \]
First, one checks using $\del^*=-\star \delb \star$, that $[\Lambda,\del^*]=[\del,L]^*=\lambda^*$. 
On the other hand, since $\tau=[\Lambda,\lambda]$, we have
\[
[\Lambda,\tau^*]=[\Lambda,[\lambda^*,L]]=[\lambda^*,[\Lambda,L]]= [H, \lambda^*] = -3\lambda^*.
\]
To explain, in the second equality we used the Jacobi identity, together with the fact that $[\Lambda,\lambda^*]=
[\lambda,L]^*=0$, since the graded commutator of wedging with $\del \omega$ and $\omega$ vanishes. Since $\la^*$ has degree $-3$, we have $[H, \lambda^*] = -3\lambda^*$. Altogether, this implies $\Delta_2 =i\lambda^*$. Finally, $\Delta_3=[\Lambda, i\lambda^*] =0$, as already noted.
\end{proof}

There is also a real commutative $\BV_\infty$-algebra with underlying differential $d$, simply by adding the previous structure to its complex conjugate. 
Indeed, since $\La$ is a real operator, we can define the real operator $T := [\La, d\omega \wedge -]$, which decomposes as
$T = \tau + \taub$, while $[d,L] = d\omega = \la + \lab$. Conjugating by $J$ we have
\[
T_c =i(\taub-\tau) \quad \quad d \omega_c = i(\lab^*-\la^*),
\]
and
\[
T_c^*=i(\tau^*-\taub^*)\quad\quad d \omega_c^* =i(\la^*-\lab^*).
\]
Recall $d_c:=J^{-1}dJ$ and $d_c^* := - \star d_c \star$.
We then have:

\begin{theo}\label{mainHermitianReal}
For any Hermitian manifold
the tuple
\[(\Aa, \wedge, \, \Delta_0=d \, , \, \Delta_1=-(d_c^* + T_c^*) \, , \, 
\Delta_2=d \omega_c^*) \, \]
is a commutative $\BV_\infty$-algebra. The total differential 
\[
\mathbb{D} =  d -i \left(d_c^* + T_c^*\right)\xi  + d\omega_c^* \, \xi^2 \in \End(\Aa_\CC)[[\xi]]
\]
satisfies 
\[
\mathbb{D} =  e^{\La \xi} \delb e^{- \La \xi}.
\]
\end{theo}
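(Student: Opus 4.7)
The strategy is to invoke Proposition \ref{prop;sillychoice} with $\Lambda$, the metric adjoint of the Lefschetz operator, which is a \emph{real} operator of degree $-2$ and order $\leq 2$. That proposition immediately yields a commutative $\BV_\infty$-algebra $(\Aa,\wedge,d,\Delta_1,\Delta_2,\ldots)$ with
\[
\Delta_k = \tfrac{1}{k!}\underbrace{[\Lambda,[\Lambda,\ldots,[\Lambda}_{k},d]\cdots]],
\qquad
\mathbb{D} = e^{\Lambda \xi}\, d\, e^{-\Lambda \xi}.
\]
It thus remains to identify $\Delta_1$ with $-(d_c^* + T_c^*)$, $\Delta_2$ with $d\omega_c^*$, and to show $\Delta_k = 0$ for $k \geq 3$. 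The moral shortcut is that the real case should follow by adding the complex version of Theorem \ref{mainHermitian} to its complex conjugate, so essentially all the hard work has been done.

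For $\Delta_1 = [\Lambda,d]$, I would split $d = \del + \delb$. Demailly's identity gives $[\Lambda,\delb] = -i(\del^* + \tau^*)$, and since $\Lambda$ is real, complex conjugation yields $[\Lambda,\del] = i(\delb^* + \taub^*)$. Summing and using $d_c^* = i(\del^* - \delb^*)$ and $T_c^* = i(\tau^* - \taub^*)$ gives
\[
\Delta_1 = i(\delb^* - \del^*) + i(\taub^* - \tau^*) = -(d_c^* + T_c^*),
\]
as claimed. For $\Delta_2 = \tfrac{1}{2}[\Lambda,\Delta_1]$, I would reuse the two identities $[\Lambda,\del^*] = \lambda^*$ and $[\Lambda,\tau^*] = -3\lambda^*$ established in the proof of Theorem \ref{mainHermitian} to get $[\Lambda, \del^* + \tau^*] = -2\lambda^*$, and by conjugation $[\Lambda,\delb^* + \taub^*] = -2\lab^*$. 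Assembling,
\[
\Delta_2 = -\tfrac{1}{2}[\Lambda, d_c^* + T_c^*] = i(\lambda^* - \lab^*) = d\omega_c^*.
\]

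For the vanishing of higher terms, note that $[\Lambda,\lambda^*] = [\lambda,L]^* = 0$ since $\lambda = \del\omega$ and $\omega$ commute under the wedge product; similarly $[\Lambda,\lab^*] = 0$. Therefore $\Delta_3 = \tfrac{1}{3}[\Lambda, \Delta_2] = 0$, and all $\Delta_k$ with $k \geq 3$ vanish. The exponential formula for $\mathbb{D}$ is then a direct consequence of Proposition \ref{prop;sillychoice}. The only possible obstacle is purely notational bookkeeping of the factors of $i$ and signs when passing between the holomorphic/antiholomorphic Hermitian identities and their real counterparts; there are no genuinely new identities to establish beyond those already used in Theorem \ref{mainHermitian}.
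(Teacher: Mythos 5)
Your proof is correct and is precisely the argument the paper intends: the paper gives no separate proof of this theorem, only the preceding remark that the real version is obtained "by adding the previous structure to its complex conjugate," and your proposal carries out exactly that, via Proposition \ref{prop;sillychoice} with the real operator $\La$, Demailly's identity plus its conjugate for $\Delta_1$, the brackets $[\La,\del^*]=\la^*$ and $[\La,\tau^*]=-3\la^*$ (and their conjugates) for $\Delta_2$, and $[\La,\la^*]=[\La,\lab^*]=0$ for the vanishing of higher terms. Note that what you prove is $\mathbb{D}=e^{\La\xi}\,d\,e^{-\La\xi}$ with $\Delta_1=-(d_c^*+T_c^*)$, which is the correct statement; the appearance of $\delb$ and of the spurious factor $-i$ in the printed formula for $\mathbb{D}$ are typos carried over from the complex case.
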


Theorems \ref{mainHermitian} and \ref{mainHermitianReal} imply
 that the underlying multicomplexes satisfy the degeneration property.
In order to obtain non-trivial and canonical homotopy hypercommutative structures, we will use the following deformation retracts, which are functorial for maps of Hermitian manifolds.

By Hodge theory any de Rham (resp. Dolbeault) cohomology class admits a unique $d$-harmonic (resp. $\delb$-harmonic) representative.
Moreover, there are projection maps
\[\pi_d:\Aa^{k}\to \Hh_d^{k}\text{ and } \pi_\delb:\Aa_\CC^{p,q}\to \Hh_\delb^{p,q}\]
to $d$-harmonic and 
 $\delb$-harmonic forms respectively, as well as Green operators
\[G_d:\Aa^{k}\to \Aa^{k}\text{ and }G_\delb:\Aa_\CC^{p,q}\to \Aa_\CC^{p,q}\]
defined by zero on harmonic forms and by the inverse of the corresponding Laplacian
on their orthogonal complement. We recall these deformation retractions here, which were also used in \cite{CiHoBV}).  We 
note that the choice of sign here in defining $h$ is to simplify the formulas in Proposition~\ref{prop: defretrTOpsi}.

\begin{lemm}[De Rham deformation retract]\label{defcanonicalR}
For a compact Riemannian manifold there is a canonical deformation retract $(\iota,\rho,h)$
for the de Rham complex $(\Aa,d)$ where $\iota$ and $\rho$ are given by taking $d$-harmonic representatives, and projecting to $d$-harmonic forms, respectively, and \[h:=-d^* G_d.\]
Explicitly, using the isomorphism $d: \Img \, d^* \to \Img \, d$, we have \[h = -d^{-1} \circ \pi_{|\Img d}: \Aa \to {\Img \, d^*}.\]
 \end{lemm}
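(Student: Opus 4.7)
The plan is to verify directly the three defining axioms of a deformation retract using standard Hodge theory, and then to extract the explicit formula for $h$ from the bijectivity of $d$ on the complement of its kernel. The key inputs are the orthogonal Hodge decomposition
\[
\Aa^k = \Hh_d^k \oplus \Img d \oplus \Img d^*,
\]
the fundamental identity $\Delta_d G_d = G_d \Delta_d = 1 - \pi_d$, and the commutations $[G_d, d] = 0 = [G_d, d^*]$, all of which follow from the fact that $G_d$ is a functional-calculus expression in the Laplacian $\Delta_d = dd^* + d^*d$.

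First I would observe that $\iota$ is a chain map trivially, since harmonic forms are $d$-closed and $\Hh_d$ carries the zero differential; and $\rho = \pi_d$ is a chain map because orthogonality of the Hodge decomposition forces $\pi_d d = 0 = d\pi_d$. The identity $\rho\iota = 1$ is immediate from orthogonal projection followed by inclusion. The chain homotopy condition then reduces to a one-line calculation:
\[
dh + hd = -dd^*G_d - d^*G_d\,d = -(dd^* + d^*d)G_d = -\Delta_d G_d = \pi_d - 1 = \iota\rho - 1.
\]

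For the explicit description of $h$, the Hodge decomposition shows that $d$ restricts to an isomorphism $d\colon \Img d^* \xrightarrow{\sim} \Img d$, so $d^{-1}$ is well defined on $\Img d$. On $\Hh_d$ one has $G_d = 0$, hence $h = 0$; on $\Img d^*$, writing an element as $d^*\gamma$ and using $[G_d, d^*]=0$ gives $h(d^*\gamma) = -d^* d^* G_d \gamma = 0$; and on $\Img d$, a form $\alpha = d\beta$ with $\beta \in \Img d^*$ uniquely determined satisfies $dd^*\beta = 0$, so
\[
h\alpha = -d^*dG_d\beta = -G_d(d^*d + dd^*)\beta = -G_d\Delta_d\beta = -(1-\pi_d)\beta = -\beta = -d^{-1}\alpha.
\]

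No substantive obstacle is expected: the statement is a standard packaging of Hodge theory as an explicit contraction, and the sign convention in $h = -d^* G_d$ is chosen precisely to streamline the formula of Proposition~\ref{prop: defretrTOpsi}. The only mild subtlety is keeping track of which summand of the Hodge decomposition an element belongs to when applying the commutation relations, which is why the case-by-case verification of the explicit formula is carried out on each of the three summands separately.
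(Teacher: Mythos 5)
Your verification is correct and is exactly the standard Hodge-theoretic argument that the paper leaves implicit (the lemma is stated without proof, being recalled from earlier work): the homotopy identity $dh+hd=-\Delta_d G_d=\pi_d-1$ and the case-by-case evaluation of $h$ on the three Hodge summands are the intended justification. Nothing further is needed.
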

  
\begin{lemm}[Dolbeault deformation retract]\label{defcanonicalC}
For a compact Hermitian manifold there is a canonical deformation retract $(\iota,\rho,h)$
for the de Dolbeault complex $(\Aa,\delb)$ where the maps 
 $\iota$ and $\rho$ are given by taking $\delb$-harmonic representatives, and projecting to $\delb$-harmonic forms, respectively, and \[h:=-\delb^* G_\delb = -\delb^{-1} \circ \pi_{|\Img \delb}: \Aa_\CC \to {\Img \, \delb^*}.\]
\end{lemm}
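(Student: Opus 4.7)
The plan is to deduce everything from Hodge theory for the Dolbeault Laplacian $\Delta_\delb = \delb \delb^* + \delb^* \delb$. On a compact Hermitian manifold, each space $\Aa_\CC^{p,q}$ admits an orthogonal decomposition
\[
\Aa_\CC^{p,q} = \Hh_\delb^{p,q} \oplus \Img \delb \oplus \Img \delb^*,
\]
and the Green operator $G_\delb$ preserves this decomposition, commutes with $\delb$ and $\delb^*$, and satisfies the identity $\Delta_\delb G_\delb = G_\delb \Delta_\delb = 1 - \pi_\delb$, where $\pi_\delb$ is the orthogonal projection onto $\Hh_\delb^{p,q}$. These are the only facts from Hodge theory I would quote.

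First I would check that $\iota$ and $\rho$ are chain maps. Since harmonic forms lie in $\Ker \delb$ and the induced differential on $\Hh_\delb = H(\Aa_\CC,\delb)$ is zero, both $\iota\circ 0 = \delb\circ \iota$ and $0 \circ \rho = \rho \circ \delb$ are immediate (the second uses that $\rho$ annihilates $\Img\delb$ by orthogonality). The relation $\rho \iota = 1$ holds tautologically, since sending a cohomology class to its harmonic representative and then back to the class is the identity.

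Next I would verify the homotopy identity $\delb h + h\delb = \iota\rho - 1$ for $h = -\delb^* G_\delb$. Using that $G_\delb$ commutes with $\delb$, we compute
\[
\delb h + h\delb = -\delb \delb^* G_\delb - \delb^* \delb G_\delb = -(\delb \delb^* + \delb^* \delb) G_\delb = -\Delta_\delb G_\delb = \pi_\delb - 1,
\]
and then observe $\iota \rho = \pi_\delb$ as endomorphisms of $\Aa_\CC$. Degrees match since $\delb$ has degree $+1$ and $h$ has degree $-1$, and the construction restricts to each $\Aa_\CC^{p,q}$ because $\delb^*$ and $G_\delb$ preserve bidegree.

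Finally, to justify the equivalent description $h = -\delb^{-1} \circ \pi_{|\Img \delb}$, I would check that $h$ vanishes on $\Hh_\delb \oplus \Img \delb^*$ and, for $\alpha = \delb \beta \in \Img \delb$ with $\beta \in \Img\delb^*$, that $h(\alpha) = -\beta$. The first claim follows since $G_\delb$ kills harmonic forms and preserves $\Img \delb^*$, so $\delb^* G_\delb$ vanishes on both summands. For the second, using $\delb^*\beta = 0$ (because $(\delb^*)^2 = 0$ and $\beta \in \Img\delb^*$), one has $\Delta_\delb \beta = \delb^* \delb \beta$; applying $G_\delb$, which commutes with everything in sight, gives $\delb^*G_\delb \delb \beta = G_\delb \delb^* \delb \beta = G_\delb \Delta_\delb \beta = \beta$, since $\beta \perp \Hh_\delb$. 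There is no real obstacle here; the only mildly delicate point is keeping track of which summand of the Hodge decomposition each vector lies in, and this is what makes the second description of $h$ unambiguous (the inverse $\delb^{-1}$ is taken with image in $\Img \delb^*$).
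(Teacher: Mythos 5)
Your verification is correct and is exactly the standard Hodge-theoretic argument the paper implicitly relies on (the lemma is stated without proof, as a recollection of classical facts): the identities $\Delta_\delb G_\delb = 1-\pi_\delb$ and $[\delb, G_\delb]=0$ give the homotopy relation, and your computation $\delb^* G_\delb \delb\beta = \beta$ for $\beta \in \Img\,\delb^*$ correctly justifies the alternative description $h = -\delb^{-1}\circ\pi_{|\Img \delb}$. No gaps.
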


 Using these canonical deformation retracts, we obtain:

\begin{cor}\label{hycomHermitian}
 The de Rham and Dolbeault cohomologies of any compact Hermitian manifold carry canonical homotopy hypercommutative algebra structures extending the commutative product. 
 Such homotopy hypercommutative algebras 
 encode the homotopy type of the initial commutative $\BV_\infty$-algebras of 
  Theorems \ref{mainHermitian} and \ref{mainHermitianReal}, respectively.
\end{cor}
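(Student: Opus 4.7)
The plan is to combine Corollary \ref{cor;cBVdefretrTOhh} with the commutative $\BV_\infty$-algebras of Theorems \ref{mainHermitian} and \ref{mainHermitianReal} and the Hodge-theoretic deformation retracts of Lemmas \ref{defcanonicalR} and \ref{defcanonicalC}. First I would verify the degeneration hypothesis required by Corollary \ref{cor;cBVdefretrTOhh}: for both algebras the total differential is by construction $\mathbb{D}=e^{\La\xi}\,d'\,e^{-\La\xi}$ with $d'\in\{\delb,d\}$, so $\varphi(\xi)=\La\xi$ tautologically satisfies the gauge equation, and the degeneration of the underlying multicomplex is then condition (1) of the Definition-Proposition defining the degeneration property.

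Next, compactness enters only to supply a canonical deformation retract. Lemma \ref{defcanonicalC} provides one, $(\iota,\rho,h=-\delb^*G_\delb)$, for $(\Aa_\CC,\delb)$, and Lemma \ref{defcanonicalR} does the analogous job for $(\Aa,d)$ with $h=-d^*G_d$; both are intrinsically associated to $(M,J,g)$. Feeding each retract into Corollary \ref{cor;cBVdefretrTOhh} then produces a homotopy hypercommutative algebra structure on $H_\delb(M;\CC)$ and on $H_{\dR}(M;\RR)$ respectively, extending the usual cup products. The strict arity-$3$ operation is the explicit formula of Corollary \ref{cor;cBVdefretrTOhh} built from $\varphi_1=h\Delta_1-\iota\rho\Delta_1 h$, with $\Delta_1=-i(\del^*+\tau^*)$ in the Dolbeault case and $\Delta_1=-(d_c^*+T_c^*)$ in the de Rham case.

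The homotopy-type claim is exactly the final clause of Theorem \ref{hycomprop}: any gauge solution yields a homotopy hypercommutative algebra whose homotopy type is determined by, and determines, that of the initial commutative $\BV_\infty$-algebra. The only real subtlety — and hence the main (mild) obstacle — is to make precise the sense in which the word \emph{canonical} is meant: different choices of Hermitian metric give different Hodge deformation retracts and thus different gauge solutions, but by Theorem \ref{hycomprop} the resulting $c\BV_\infty/\{\Delta_i\}$-structures are $\infty$-quasi-isomorphic, so the induced homotopy hypercommutative algebras share a common homotopy type. Canonicity therefore resides at the level of homotopy types rather than as a strict formula on cohomology.
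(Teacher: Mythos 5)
Your proof follows the paper's own argument exactly: degeneration is immediate because the total differentials of Theorems \ref{mainHermitian} and \ref{mainHermitianReal} are by construction conjugates $e^{\La\xi}d'e^{-\La\xi}$, so $\varphi(\xi)=\La\xi$ tautologically solves the gauge equation, and one then feeds the canonical Hodge-theoretic deformation retracts of Lemmas \ref{defcanonicalC} and \ref{defcanonicalR} into Theorem \ref{hycomprop} (equivalently, Corollary \ref{cor;cBVdefretrTOhh}). The core argument is correct and identical in structure to the paper's.

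One caveat on your closing paragraph: the assertion that different Hermitian metrics yield homotopy-equivalent hypercommutative structures is not justified by Theorem \ref{hycomprop}. Changing the metric changes not only the deformation retract (hence the gauge solution) but the operators $\Delta_1,\Delta_2$ themselves, and moreover distinct gauge solutions for one and the same commutative $\BV_\infty$-algebra can produce inequivalent homotopy hypercommutative structures: Proposition \ref{prop;sillychoice}(2) gives a trivializing gauge solution for exactly these algebras, while the deformation-retract solutions produce nontrivial $m_3$ in Section \ref{SectionExamples}. The paper's ``canonical'' means only that no choices beyond the Hermitian structure (metric included) are made, via the functorial retracts; it makes no metric-independence claim.
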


\begin{proof}
By Theorems \ref{mainHermitian} and \ref{mainHermitianReal} respectively we have commutative $\BV_\infty$-algebras on the Dolbeault (resp. de Rham) algebra of forms, and both satisfy the degeneration property.
The result follows from  Theorem \ref{hycomprop} together the canonical deformation retracts of Lemmas \ref{defcanonicalC} and \ref{defcanonicalR}, respectively.
\end{proof}

Note that even if the Frölicher spectral sequence of a complex manifold degenerates at the first stage, the Dolbeault and de Rham cohomologies of the manifold are not in general isomorphic as commutative algebras.
In particular, in general, the above two homotopy hypercommutative algebra structures will not be homotopy equivalent. In contrast, in the case of K\"ahler manifolds, in \cite{CiHoBV} it is shown that the Dolbeault and de Rham algebras are in fact quasi-isomorphic as $\BV$-algebras, and the associated homotopy hypercommutative algebra structures are in this case formal. We will give examples of non-formal and non-trivial hypercommutative structures on Hermitian manifolds in Section \ref{SectionExamples}.

\section{Almost Hermitian and almost symplectic manifolds}\label{SecAHAS}
In this section we extend the previous results to the case of almost Hermitian and almost symplectic manifolds. In the particular case of almost K\"ahler (and thus symplectic) manifolds, one recovers the $\BV$ and hypercommutative structures defined by \cite{Koszul} and \cite{DSV}, defined for any Poisson manifold. 
Therefore, the results may be viewed as a natural extension of the theory of hypercommutative structures for Poisson manifolds to (almost) Hermitian manifolds.
 Finally, the (almost) symplectic case is shown to have bigraded refinement, given the data of an integrable Lagrangian subbundle of the real tangent bundle. We view this symplectic-Lagrangian version, on the real differential forms, as the symplectic analogue of the Hermitian case of the previous section.

\subsection{Almost Hermitian manifolds}
With small modifications, the results of Section \ref{HermitianSec} generalize easily to the case of almost Hermitian manifolds, so we will be brief. This requires a generalization of the K\"ahler identities which, to our knowledge, was first deduced by Tomassini and Wang \cite{TW}. These identities were independently derived, via a different method using algebraic calculations in the Clifford algebra, by Fernandez and Hosmer \cite{FH}, and we follow the notation given therein. The fundamental relation is
\[
[\La, d]= - (d_c^* + T_c^*) 
\]
where 
\[
T= [\La, [d,L]] =[\La, d \omega \wedge -].
\]
where $d\omega  \wedge - $ is the operator of degree $3$ given by wedging with $d\omega$, we let \[
T_c= J^{-1} T J\text{ and }
     d\omega_c=J^{-1} d\omega J,
     \]
and $T_c^*$ and $d\omega_c^*$ are the adjoints.

An argument similar to the proof of Theorem \ref{mainHermitian} yields:

\begin{theo}\label{mainAH}
For any almost Hermitian manifold, the tuple
\[
\left( \Aa, \wedge, d, \, \Delta_1 =  - (d_c^* + T_c^*) \, , \, \Delta_2 = d \omega_c^* \, \right)
\]
is a commutative $\BV_\infty$-algebra on the real differential forms. The total differential 
\[
\mathbb{D} =  d +  \Delta_1 \xi + \Delta_2 \xi^2 \in \End(\Aa)[[\xi]]
\]
 satisfies
\[
\mathbb{D} =  e^{ \La \xi} d e^{- \La \xi}.
\]
\end{theo}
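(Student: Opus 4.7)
The strategy is to reduce the theorem directly to Proposition \ref{prop;sillychoice} applied to $\La$, the metric adjoint of the Lefschetz operator $L = \omega \wedge -$. Since $L$ is wedging by a $2$-form, $\La$ has degree $-2$ and algebraic order $\leq 2$. Proposition \ref{prop;sillychoice} then yields, essentially for free, both the commutative $\BV_\infty$-algebra structure with $\Delta_k = \tfrac{1}{k!}[\La,[\La,\ldots,[\La,d]]]$ ($k$ brackets) and the conjugation identity $\mathbb{D} = e^{\La\xi} d e^{-\La\xi}$. Everything then reduces to three checks: $\Delta_1 = -(d_c^*+T_c^*)$, $\Delta_2 = d\omega_c^*$, and $\Delta_k = 0$ for $k\geq 3$.

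The first identification is immediate: $\Delta_1 = [\La,d]$ is exactly the generalized K\"ahler identity of Tomassini--Wang and Fernandez--Hosmer recalled just above. For $\Delta_2 = \tfrac{1}{2}[\La,\Delta_1]$, I would mimic the calculation in the proof of Theorem \ref{mainHermitian}, invoking the Jacobi identity and two basic observations: $(i)$ since $\omega$ has type $(1,1)$ and the Hodge star commutes with $J$, the operator $J$ commutes with $L$ and hence with $\La$; $(ii)$ for any form $\alpha$, denoting $L_\alpha := \alpha\wedge -$, one has $[\La, L_\alpha^*] = [L_\alpha, L]^*$. Applying $(ii)$ via $d_c^* = -\star d_c \star$ and using $(i)$ to move $J$ through, one obtains $[\La, d_c^*] = [d_c,L]^* = d\omega_c^*$. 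Similarly, starting from $T_c = J^{-1}[\La, L_{d\omega}]J$ and using $(i)$ together with the weight relation $[H, d\omega_c^*] = -3\,d\omega_c^*$, one arrives at $[\La, T_c^*] = -3\,d\omega_c^*$, exactly paralleling the coefficient that produced $\Delta_2 = i\la^*$ in the Hermitian case. The two contributions add to $\Delta_2 = d\omega_c^*$.

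Finally, $\Delta_3 = \tfrac{1}{3}[\La, d\omega_c^*] = \tfrac{1}{3}[L_{d\omega_c}, L]^* = 0$, since wedging by the $2$-form $\omega$ and by the $3$-form $d\omega_c$ commute as operators on forms; by the same argument all higher iterated brackets vanish, so $\mathbb{D} = d + \Delta_1\xi + \Delta_2\xi^2$ truncates as claimed. The main technical obstacle is accurate bookkeeping of the $J$-conjugations and of the extra bidegree components $\mu,\mub$ present in the almost complex setting (via $d = \del + \delb + \mu + \mub$), which feed into the decompositions of $d_c$, $T_c$ and $d\omega_c$; however, these are handled uniformly by the Jacobi identity and the $J$-invariance of $\La$, following the template already established in the proof of Theorem \ref{mainHermitian}.
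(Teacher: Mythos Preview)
Your proposal is correct and follows essentially the same route as the paper: reduce to Proposition \ref{prop;sillychoice}, cite the Tomassini--Wang/Fernandez--Hosmer identity for $\Delta_1$, and then compute $[\La,d_c^*]=d\omega_c^*$ and $[\La,T_c^*]=-3\,d\omega_c^*$ via the Jacobi identity and the weight relation $[H,d\omega]=3\,d\omega$, exactly as the paper does. One small remark: the anticipated ``bookkeeping'' of the $\mu,\mub$ components is in fact unnecessary, since the entire computation can be carried out with the real operators $d$, $d_c$, $T_c$, $d\omega_c$ without ever decomposing into bidegrees, and your observation $(ii)$ is really the general adjoint identity $[\La,\phi^*]=[\phi,L]^*$ rather than something specific to zeroth-order wedge operators.
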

\begin{proof}
 By (1) of Proposition \ref{prop;sillychoice} it suffices to show that 
\[\Delta_1=[\Lambda,d]=-(d_c^* + T_c^*),\, \Delta_2={1\over 2}[\Lambda,[\Lambda,d]]=d \omega_c^*\text{ and }\Delta_3={1\over 6}[\Lambda,[\Lambda,[\Lambda,d]]]=0.\]
The expression for $\Delta_1$ is cited above. Next, note that 
\[
[\Lambda,[\Lambda,d]]=-[\Lambda,d_c^*]-[\Lambda,T_c^*].\]
One checks using $d^* = - \star d \star$ that $[\Lambda,d_c^*]=d\omega_c^*$.
 Next,  we have 
\[
[\Lambda, T_c^*]= J^{-1}[\La,T^*]J =  J^{-1}[T,L]^*J = -3 d\omega_c^*
\]
where we used
\[
[T,L] = [[\La,d\omega],L] = -[H,d\omega] = - 3 d\omega,
\]
using the Jacobi identity.  This gives $\Delta_2 = d \omega_c^*$ as desired.
Finally $\Delta_3 =0$ since $[\La, d\omega_c^*] = [d \omega_c , L ]^* = 0$.
\end{proof}

Note in the integrable case, the relation $[\La, d]= - (d_c^* + T_c^*)$ reduces to the fundamental identity of Demailly for Hermitian manifolds, so we recover Theorem  \ref{mainHermitianReal}.
Moreover, the above result, together with the canonical deformation retract of Lemma \ref{defcanonicalR}
allow to generalize Corollary \ref{hycomHermitian} to almost Hermitian manifolds. We obtain:

\begin{cor}\label{hycomAH}
 The de Rham cohomology of any compact almost Hermitian manifold carries a canonical homotopy hypercommutative algebra structure extending the commutative product. 
\end{cor}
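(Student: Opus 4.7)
The plan is to assemble three ingredients already established in the paper: Theorem \ref{mainAH}, which provides the $\BV_\infty$-structure on the real de Rham algebra $\Aa$ of the almost Hermitian manifold; Lemma \ref{defcanonicalR}, which provides a canonical deformation retract of $(\Aa, d)$ onto $(H^*_{\dR}(M), 0)$ via the Hodge decomposition (available because $M$ is compact); and Theorem \ref{hycomprop}, which turns a commutative $\BV_\infty$-algebra equipped with a solution to the gauge equation into a homotopy hypercommutative algebra on the underlying cochain complex.

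First, I would invoke Theorem \ref{mainAH} to obtain the commutative $\BV_\infty$-algebra
\[
(\Aa, \wedge, d, \Delta_1 = -(d_c^* + T_c^*), \Delta_2 = d\omega_c^*),
\]
with total differential $\mathbb{D} = e^{\Lambda \xi} d e^{-\Lambda \xi}$. The key observation is that this identity exhibits $\varphi(\xi) = \Lambda \xi$ as an explicit solution to the gauge equation; equivalently, the underlying multicomplex automatically satisfies the degeneration property (by the equivalence of conditions in the Definition-Proposition defining degeneration, or simply by invoking Proposition \ref{prop;sillychoice}).

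Next, since $M$ is compact, the underlying Riemannian metric gives us the canonical de Rham deformation retract $(\iota, \rho, h)$ of Lemma \ref{defcanonicalR}, with $h = -d^* G_d$. Because the multicomplex satisfies the degeneration property, Proposition \ref{prop: defretrTOpsi} then produces a canonical explicit solution $\varphi(\xi) \in \End(\Aa)[[\xi]]$ to the gauge equation built from the triple $(\iota, \rho, h)$ and the operators $\Delta_1, \Delta_2$. Feeding this solution into Theorem \ref{hycomprop} yields a homotopy hypercommutative algebra structure on $(\Aa, d)$ extending the commutative product, and since the deformation retract identifies the underlying cochain complex of the transferred structure with $(H^*_{\dR}(M), 0)$, we obtain the desired homotopy hypercommutative structure on $H^*_{\dR}(M)$.

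There is no genuine obstacle here — the argument is entirely parallel to the proof of Corollary \ref{hycomHermitian}, replacing Theorem \ref{mainHermitian} by Theorem \ref{mainAH} and using only the real deformation retract of Lemma \ref{defcanonicalR} (since $\Delta_1$ and $\Delta_2$ are real operators in the almost Hermitian case). The only point worth emphasizing is canonicity: the structure depends only on the data of the almost Hermitian metric, because both the $\BV_\infty$-structure on $\Aa$ and the deformation retract $(\iota, \rho, h)$ are determined by $(g, J)$ alone. Thus the proof can be stated in essentially one sentence, citing Theorems \ref{mainAH} and \ref{hycomprop} together with Lemma \ref{defcanonicalR}.
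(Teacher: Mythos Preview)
Your proposal is correct and follows essentially the same approach as the paper: the paper simply remarks that Theorem \ref{mainAH} together with the canonical deformation retract of Lemma \ref{defcanonicalR} generalizes Corollary \ref{hycomHermitian} to the almost Hermitian case, which is exactly the argument you spell out. Your explicit invocation of Proposition \ref{prop: defretrTOpsi} and Theorem \ref{hycomprop} just unpacks what the paper leaves implicit (equivalently, one cites Corollary \ref{cor;cBVdefretrTOhh}).
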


As in the case of complex manifolds, the complex de Rham algebra 
of any almost Hermitian manifold admits a bidegree decomposition induced by the $\pm i$-eigenspaces of $J$.
In this general case, the exterior differential decomposes as 
\[ d=  \mub + \delb + \del + \mu\]
with bidegrees given by
$|\mub| =(-1,2)$, $|\delb| = (0,1)$, $|\delb|= (0,1)$ and $|\mu| = (2,-1)$. Note that
$\mu$ and $\mub$ vanish if and only if $J$ is integrable. 
However, there is no obvious bigraded version of Corollary \ref{hycomAH}.
While Dolbeault cohomology may still be defined as in \cite{CWDol},
there is no Hodge theory available in general,
and so we don't have canonical deformation retracts for Dolbeault cohomology in the non-integrable case.
As we will later see, a bigraded version may be defined for the case of symplectic manifolds admitting real polarizations.

\subsection{Almost symplectic and almost K\"ahler manifolds}

In this subsection we indicate an almost-symplectic version and observe that whenever the fundamental form is closed, $d \omega = 0$, the induced commutative $\BV_\infty$-algebra of Theorem \ref{mainAH} is the $\BV$-algebra induced by Koszul \cite{Koszul}, for symplectic manifolds. In particular, it is metric independent.  Therefore, the induced hypercommutative structures agree as well.

Recall from Koszul \cite{Koszul}, on a Poisson manifold $(M,\pi)$ there is a 
differential graded $\BV$-algebra 
\[(\Aa,\wedge, d,\Delta=[i_\pi,d]),\]
where $i_\pi:\Aa^*\to \Aa^{*-2}$ is defined by contracting forms 
with the Poisson bi-vector field.
Moreover, the gauge equation
\[e^{i_\pi\xi} de^{-i_\pi \xi}=d+\Delta \xi\]
is satisfied. In particular, this fits into the setting of Proposition \ref{prop;sillychoice}.
Recall, the choice of solution $\varphi(\xi)=i_\pi \xi$ to the gauge equation leads to trivial hypercommutative structures, as first noted in \cite{GuMu}. However, by Theorem \ref{hycomprop} and choosing a Riemannian metric, the canonical deformation retract for the de Rham algebra of Lemma \ref{defcanonicalR} gives homotopy hypercommutative
structures for Poisson manifolds, which are non-trivial in general (as shown in Example \ref{KTPoisson} below).

Recall that given a symplectic manifold $(M,\omega)$, there is a symplectic Hodge-star operator 
 \[
 \star_\omega: \cA^k \to \cA^{2n-k}\quad\text{ 
 defined by } \quad
\alpha \wedge \star_\omega \beta = \langle\alpha, \beta
\rangle_\omega \frac{\omega^n}{n!},
\]
where $\langle -,- \rangle_\omega$ is the pairing on $\cA^*$ induced by $\omega$.
Explicitly, if $\alpha = a_1 \wedge \cdots \wedge a_k$ and $\beta = b_1 \wedge \cdots \wedge b_k$ then $\langle  \alpha, \beta \rangle_\omega = \det ( \omega^{-1}(a_i,b_j))$. One can verify that $\star_\omega^2=1$, and $\star_\omega (1) = \frac{\omega^n}{n!}$ is the volume form,
and define the pairing
\[
(\alpha,\beta)_\omega = \int_M \langle  \alpha, \beta \rangle_\omega  \frac{\omega^n}{n!}
\]
in the case that $M$ is closed.

Letting $L\eta = \omega \wedge \eta$, the symplectic adoint of $L$, denoted by $\Lambda = L^\omega$, is defined by 
$\langle  \Lambda \alpha, \beta \rangle_\omega = \langle  \alpha, L \beta \rangle_\omega$, and satisfies
$\Lambda= \star_\omega L \star_\omega$. More generally, we can similarly define the symplectic adjoints of any operators $\phi_1$ and $\phi_2$ of algebraic order zero, which satisfies $[\phi_1,\phi_2]^\omega = [\phi_2^\omega,\phi_1^\omega]$.
The symplectic adjoint of such a zeroth-order operator $\phi$, of degree 
$|\phi|$, equals $\phi^\omega = (-1)^{|\phi|(k-\phi)} \star_\omega \phi \star_\omega$ on $\cA^k$. 
In particular, for the degree $3$ operator $[d,L]$, of algebraic order zero, we have
\[
[d,L]^\omega = (-1)^{k+1} \star_\omega [d,L] \star_\omega \quad \textrm{on $\cA^k$}.
\]

The symplectic adjoint of $d$ is defined by 
\[
d^\omega := (-1)^k \star_\omega d \star_\omega \quad \textrm{on $\cA^k$},
\]
and satisfies $(d\alpha,\beta)_\omega = (\alpha,d^\omega \beta)_\omega$ when $M$ is compact. 

To connect with the Poisson case, a non-degenerate Poisson bi-vector field $\pi$ defines a contraction operator $i_\pi$, and in the symplectic case, $i_\pi = -\La$ (c.f. \cite{Bry} p. 96) and the relation $[i_\pi,d]= \Delta$ of Koszul \cite{Koszul} becomes
\[
[\La,d]= d^\omega. 
\]
 We can now state an almost-symplectic analogue.

\begin{theo} \label{thm;AS}
Let $(M,\omega)$ be an almost symplectic manifold. The tuple
\[(\Aa, \wedge, d , \Delta_1 :=d^\omega + [\La, [d,L]]^\omega, \Delta_2 = -[d,L]^\omega )
\]
is a commutative $\BV_\infty$-algebra.
The total differential
\[
\mathbb{D} =  d + (d^\omega +  [\La, [d,L]]^\omega) \xi -  ([d,L]^\omega)  \xi^2 \in \End(\Aa)[[\xi]]
\]
 satisfies
\[
\mathbb{D} = e^{\Lambda \xi} d e^{ -\Lambda \xi}.
\]
\end{theo}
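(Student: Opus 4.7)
The plan is to apply Proposition \ref{prop;sillychoice}(1) with $\La = L^\omega$, the symplectic adjoint of the Lefschetz operator. One first observes that $\La$ has degree $-2$ and algebraic order at most $2$: locally, $\La$ coincides up to sign with the contraction $i_{\omega^{-1}}$ by the symplectic bivector, which factors as a sum of products of two vector-field contractions, each a derivation of order $1$. Proposition \ref{prop;sillychoice}(1) then immediately produces a commutative $\BV_\infty$-algebra $(\cA, \wedge, d, \{\Delta_k\}_{k \geq 1})$ with $\Delta_k = \frac{1}{k!}[\La,[\La,\ldots,[\La, d]]]$ ($k$ brackets) and total differential $\mathbb{D} = e^{\La \xi} d e^{-\La \xi}$, proving the exponential formula. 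What remains is to identify $\Delta_1$ and $\Delta_2$ explicitly and to show $\Delta_k = 0$ for $k \geq 3$.

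For $\Delta_1 = [\La, d]$, I want to establish the generalized Koszul identity $[\La, d] = d^\omega + [\La, [d, L]]^\omega$. When $d\omega = 0$ the correction term $[\La, [d, L]]^\omega$ vanishes and this specialises to Koszul's original identity $[\La, d] = d^\omega$. For the general case I would proceed either via a direct local computation, or by choosing a compatible almost complex structure $J$ and relating the symplectic-adjoint expressions on the right to the metric-adjoint identity $[\La, d] = -(d_c^* + T_c^*)$ of Fernandez-Hosmer (cf.\ Theorem \ref{mainAH}); since both sides of the target identity are intrinsic to $(M, \omega)$, the auxiliary $J$ would drop out.

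Granting Step 1, the remaining computations mirror the proof of Theorem \ref{mainAH}. Using the adjoint rule $[\phi, \psi]^\omega = [\psi^\omega, \phi^\omega]$ one obtains $[\La, d^\omega] = [L^\omega, d^\omega] = [d, L]^\omega$. Setting $T := [\La, [d, L]]$ and applying the graded Jacobi identity to $[T, L] = [[\La, L_{d\omega}], L]$, together with the algebraic relations $[L_{d\omega}, L] = 0$ (since $\omega$ and $d\omega$ wedge-commute), $[\La, L] = -H$, and $[L_{d\omega}, H] = -3\, L_{d\omega}$, one computes $[T, L] = -3[d, L]$. Hence $[\La, T^\omega] = [T, L]^\omega = -3\,[d, L]^\omega$, so
\[
[\La, \Delta_1] = [d, L]^\omega - 3[d, L]^\omega = -2[d, L]^\omega
\]
and $\Delta_2 = \tfrac12 [\La,\Delta_1] = -[d, L]^\omega$. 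Finally, $\Delta_3 = \tfrac{1}{3}[\La, \Delta_2] = -\tfrac{1}{3}[\La, [d, L]^\omega]$, whose symplectic adjoint is $-\tfrac{1}{3}[[d, L], L] = -\tfrac{1}{3}[L_{d\omega}, L] = 0$; hence $\Delta_3 = 0$ and all higher $\Delta_k$ vanish. The main obstacle is thus Step 1, the generalized Koszul identity, which is where the failure of $\omega$ to be closed genuinely enters and which plays, in the symplectic setting, the role that the Fernandez-Hosmer identity plays in the almost Hermitian case.
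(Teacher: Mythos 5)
Your overall strategy is the same as the paper's: invoke Proposition \ref{prop;sillychoice}(1) with $\La$ of order $\leq 2$, identify $\Delta_1$ via a generalized Koszul identity, and then run the bracket computations for $\Delta_2$ and $\Delta_3$. Your order-$\leq 2$ argument for $\La$ (via $i_{\omega^{-1}}$ as a sum of products of two contractions) is fine, and your computations of $[\La,\Delta_1]=-2[d,L]^\omega$ and $[\La,[d,L]^\omega]=[L,[d,L]]^\omega=0$ coincide with the paper's, signs and all.

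The one substantive issue is that you leave the heart of the theorem --- the identity $[\La,d]=d^\omega+[\La,[d,L]]^\omega$ --- unproved, offering only two candidate strategies. The paper does not prove it from scratch either: it quotes the almost symplectic K\"ahler-type identity $[d,\La]=d^\La+[[\La,d^\La],L]$ of Tomassini--Wang and then converts it to the stated form via the computation $[\La,d^\omega]=(-1)^k\star_\omega[L,d]\star_\omega=[d,L]^\omega$, so that $[[\La,d^\omega],L]=[[d,L]^\omega,L]=[\La,[d,L]]^\omega$. Your second proposed route --- choosing a compatible $J$ and transporting the Fernandez--Hosmer identity $[\La,d]=-(d_c^*+T_c^*)$ --- is in fact viable and is essentially what the paper's subsequent lemma establishes (it proves $d_c^*=-d^\omega$, $T_c^*=-[\La,[d,L]]^\omega$ and $d\omega_c^*=-[d,L]^\omega$ for any compatible almost Hermitian structure); combined with your observation that both sides of the target identity depend only on $\omega$, this closes the gap. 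But as written, your Step 1 is a plan rather than a proof, and since everything downstream hinges on it, you need to actually carry out one of the two routes (or cite Tomassini--Wang as the paper does).
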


\begin{proof}
 In \cite{TW}, Tomassini and Wang show that, for any almost symplectic manifold,
\[
[d,\La] = d^\Lambda + [[\La, d^\La],L],
\]
where the sign convention $d^\La := (-1)^{k+1} \star_\omega d \star_\omega$ on $\cA^k$ is used,
so that $d^\omega = - d^\La$ . We claim this is equivalent to 
\[
[\La, d] = d^\omega + [\La, [d,L]]^\omega. 
\]
 To justify the right-hand summand we compute 
\[
 [[\La,d^\omega],L]=  [[d,L]^\omega,L] = [\La, [d,L]]^\omega,
\]
where the first equality is justified by the formal adjoint calculation
\[
[\La,d^\omega] = \La (-1)^k \star_\omega d \star_\omega - (-1)^k \star_\omega d \star_\omega \La 
=
(-1)^k \star_\omega [L,d] \star_\omega = [d,L]^\omega.
\]

The remainder of the proof is similar to the previous cases. From \cite{TW}, we have
 \[
 \Delta_1 = [\La, d] = d^\omega + [\La, [d,L]]^\omega.
 \]
Then computing we have
\[
2 \Delta_2 = [\La,  d^\omega + [\La, [d,L]]^\omega ] = [\La,  d^\omega] + [\La, [\La, [d,L]]^\omega ]
\]
where $[\La,  d^\omega] = [d,L]^\omega$ and 
\[
[\La, [\La, [d,L]]^\omega ] = - [L, [\La, [d,L]]]^\omega = - [H, [d,L]]^\omega = -3 [d,L]^\omega,
\]
so that $\Delta_2 = -[d,L]^\omega$. Finally, $\Delta_k = 0$ for $k \geq 0$ since
$[\La, [d,L]^\omega] = [L, [d,L]]^\omega = 0$.
\end{proof}

In the symplectic case, with $[d,L]=d\omega=0$, this recovers Koszul's $\BV$-algebra as 
\[(\Aa,\wedge,d,\Delta=d^\omega).\]

Now, suppose we have an almost Hermitian manifold, i.e. an almost symplectic structure $\omega$, with compatible almost complex structure $J$, so that $g(X,Y) = \omega(X,JY)$ is a Riemannian metric. Then $g$ determines a metric  $\langle -,- \rangle$ on each space $\cA^k$, and the (Riemannian) Hodge star is 
 \[
 \star: \cA^k \to \cA^{2n-k}\quad\text{  defined by } \quad 
 \alpha \wedge \star \beta = \langle\alpha, \beta \rangle \Omega.
\]
where $\Omega$ is a volume form. Recall that $\star^{2}=(-1)^k$ on $\cA^k$, and $\star = \star_\omega J = J \star_\omega$, where $J$ is the extension to $\cA^*$ as an algebra automorphism. 

 \begin{lemm} For any almost Hermitian manifold $(M, g, J, \omega)$ we have
\begin{align*}
 d^*_c &= - d^\omega \\
 d \omega_c^* & = -[d,L]^\omega \\
 T_c^* & =  - [\La, [d,L]]^\omega.
 \end{align*}
 \end{lemm}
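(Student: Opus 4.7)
My plan is to reduce all three equalities to a single algebraic identity that holds for any operator $\phi$ on $\cA^*$ of fixed degree $p = |\phi|$:
\[
(J^{-1} \phi J)^* = (-1)^{p} \phi^\omega.
\]
Granting this, the three claims follow immediately by taking $\phi = d$, $\phi = d\omega \wedge - = [d,L]$, and $\phi = T = [\La, [d,L]]$, whose degrees $1$, $3$, and $1$ are all odd, producing the desired minus signs.

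To prove the key identity, I would first observe that $J$, extended to $\cA^*$ as an algebra automorphism, is an isometry of the induced Riemannian metric (since $g(JX, JY) = g(X,Y)$), so $J^* = J^{-1}$; hence $(J^{-1}\phi J)^* = J^{-1}\phi^* J$. Next I would compare the Riemannian and symplectic pairings. Using $\star = \star_\omega J$ and the fact that the Riemannian volume form coincides with $\omega^n/n!$, the pointwise identity $\alpha \wedge \star \beta = \alpha \wedge \star_\omega (J\beta)$ yields
\[
\langle \alpha, \beta \rangle = \langle \alpha, J\beta \rangle_\omega.
\]
Substituting this into the defining equations of the two adjoints, $\langle \phi \alpha, \beta\rangle = \langle \alpha, \phi^* \beta\rangle$ and $\langle \phi \alpha, \gamma\rangle_\omega = \langle \alpha, \phi^\omega \gamma\rangle_\omega$, gives $\phi^\omega J = J \phi^*$, equivalently $\phi^* = J^{-1} \phi^\omega J$.

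Combining the two steps,
\[
(J^{-1} \phi J)^* = J^{-1} \phi^* J = J^{-2} \phi^\omega J^2.
\]
Since $J^2 = (-1)^k \mathrm{Id}$ on $\cA^k$, a short degree count shows that on $\cA^j$ the composite $J^{-2} \phi^\omega J^2$ equals $(-1)^{j-p}(-1)^j = (-1)^p$ times $\phi^\omega$, establishing the key identity.

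The only mild obstacle is checking that the relation $\phi^* = J^{-1}\phi^\omega J$ applies not only to the zeroth-order operators $d\omega \wedge -$ and $T$, but also to the first-order operator $d$. This is not a real difficulty: the derivation uses only the pointwise pairing identity and the definitions of $\phi^*$ and $\phi^\omega$ through their respective $L^2$-pairings, with the usual integration-by-parts boundary terms vanishing on compactly supported forms, so no special treatment of operator order is needed.
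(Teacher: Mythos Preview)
Your argument is correct and in fact more conceptual than the paper's. You isolate a single identity $(J^{-1}\phi J)^* = (-1)^{|\phi|}\phi^\omega$, valid for any homogeneous operator, and then specialize to $\phi = d,\ d\omega\wedge-,\ T$. The key steps ($J^* = J^{-1}$, the pointwise relation $\langle\alpha,\beta\rangle = \langle\alpha,J\beta\rangle_\omega$ via $\star = \star_\omega J$, and the resulting intertwining $\phi^* = J^{-1}\phi^\omega J$) are all sound, and your final degree count for $J^{\pm 2}$ gives exactly the $(-1)^{|\phi|}$ sign.

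The paper instead verifies each equality by a direct Hodge-star computation, substituting $J\star = \star J = (-1)^k\star_\omega$ into the explicit formulas $d_c^* = -\star J^{-1}dJ\star$ and $d\omega_c^* = (-1)^{k+1}\star^{-1}J^{-1}(d\omega)J\star$, and then handles $T_c^*$ via bracket identities, writing $T_c^* = [\La,d\omega_c]^* = [d\omega_c^*,L] = -[[d,L]^\omega,L] = -[\La,[d,L]]^\omega$ using the already-established second identity. Your route is shorter and uniform; the paper's is more hands-on but avoids invoking the $L^2$ pairing (and hence compactness or compact supports) for the zeroth-order cases. One small point worth making explicit in your write-up: the relation $\langle\phi\alpha,\gamma\rangle_\omega = \langle\alpha,\phi^\omega\gamma\rangle_\omega$ you use is consistent with the paper's formula $\phi^\omega = (-1)^{|\phi|(k-|\phi|)}\star_\omega\phi\star_\omega$, but since the symplectic pairing is only graded-symmetric, it is worth a line confirming this (the paper states the defining relation only for $L$, which has even degree).
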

 
\begin{proof} First, the metric adjoint of a zeroth order operator $\phi$ of degree $|\phi|$ equals to 
\[
\phi^* = (-1)^{|\phi|(k-\phi)} \star^{-1} \phi \star \quad \textrm{on $\cA^k$.}
\]  
In particular, 
$[d,L]^*= (-1)^{k+1} \star^{-1} [d,L] \star$ and $d\omega_c^* = (-1)^{k+1} \star^{-1} d\omega_c \star$ on $\cA^k$.
Then, using the fact that $J \star = \star J = (-1)^k \star_\omega$ on $\cA^k$, we compute on $\cA^k$ that
\begin{align*}  
d^*_c = - \star J^{-1} d J \star &= -(-1)^{k+1} \star^{-1} J^{-1} d J \star \\
& = (-1)^{k} (J \star)^{-1} d (J \star)  \\
& = (-1)^k (-1)^{k+1} \star_\omega d   (-1)^k  \star_\omega = - d^\omega,
\end{align*}
and
\begin{align*}
d \omega_c^* = (-1)^{k+1} \star^{-1} J^{-1} d \omega J \star   
&=   (-1)^{k+1} (-1)^{k+3} \star_\omega^{-1}  d \omega  (-1)^k \star_\omega \\ 
& = (-1)^k  \star_\omega [d,L]  \star_\omega = -[d,L]^\omega,
\end{align*}
and
\[
T_c^* = [\La,d\omega_c]^*= [d \omega_c^*,L] = -[[d,L]^\omega,L] =-[\La,[d,L]]^\omega.
\]\qedhere
\end{proof}

\begin{cor}
The commutative $BV_\infty$-structures of Theorem \ref{mainAH} and Theorem \ref{thm;AS} coincide for almost Hermitian manifolds, and in particular depend only on $\omega$, not $J$ and the metric individually.
In the almost K\"ahler case, $d \omega = 0$, the commutative $\BV_\infty$-algebra of Theorem \ref{mainAH} is
\[
\left( \Aa, \wedge, d, \, \Delta_1 =  - (d_c^* + T_c^*) \, , \, \Delta_2 = d \omega_c^* \, \right)
=
\left( \Aa, \wedge, d, \, \Delta_1 = d^\omega  \,, \Delta_2 = 0\, \right),
\]
which is Koszul's $\BV$-algebra.
\end{cor}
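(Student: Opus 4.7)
The plan is that this corollary follows essentially by direct substitution from the preceding Lemma, which is the technical content. The Lemma translates all three metric adjoints appearing in Theorem~\ref{mainAH} into symplectic adjoints: $d_c^* = -d^\omega$, $T_c^* = -[\La,[d,L]]^\omega$, and $d\omega_c^* = -[d,L]^\omega$. Plugging these into the operators of Theorem~\ref{mainAH} gives
\[
\Delta_1 = -(d_c^* + T_c^*) = d^\omega + [\La,[d,L]]^\omega, \qquad \Delta_2 = d\omega_c^* = -[d,L]^\omega,
\]
which are exactly the operators of Theorem~\ref{thm;AS}. Since the right-hand sides only involve $\omega$, via $L$, $\La$, $d^\omega$, and $[d,L]$, none of which refer to $J$ or the metric individually, the first claim of the corollary follows.

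For the almost K\"ahler claim, the key observation is that as an operator on forms, $[d,L]\eta = d(\omega\wedge \eta) - \omega\wedge d\eta = d\omega \wedge \eta$, so $[d,L]$ is just wedging with $d\omega$. Hence $d\omega = 0$ forces $[d,L] = 0$ identically, which in turn makes $T = [\La,[d,L]] = 0$ and $d\omega_c^* = -[d,L]^\omega = 0$. Substituting into Theorem~\ref{mainAH} collapses $\Delta_2$ to zero and reduces $\Delta_1$ to $-d_c^* = d^\omega$ by the Lemma.

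The upshot is the tuple $(\Aa,\wedge,d,d^\omega)$, which is Koszul's BV-algebra. The only step that requires any care is unwinding that $[d,L] = d\omega\wedge -$ vanishes globally as an operator rather than merely being exact, but this is immediate from the derivation property of $d$. I do not expect any serious obstacle: once the Lemma is in hand, the proof is pure bookkeeping, and the main conceptual point is simply the observation that the formulas obtained via the Hermitian K\"ahler-type identities match the purely symplectic formulas obtained via Tomassini--Wang.
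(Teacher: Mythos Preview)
Your proposal is correct and matches the paper's approach: the corollary is stated without its own proof in the paper precisely because it is immediate from the preceding Lemma by direct substitution, exactly as you describe. The only thing to add is that the paper does not spell out the almost K\"ahler reduction either, but your observation that $[d,L]$ is the zeroth-order operator ``wedge with $d\omega$'' (so it vanishes identically when $d\omega=0$) is the intended argument.
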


\subsection{Symplectic manifolds with real polarizations}
An almost complex manifold $(M,J)$ has a complex splitting, or polarization,
$T_\CC M = T^{0,1}M \oplus T^{0,1} M$ inducing a decomposition of differential forms. In the integrable case, this leads to the bigraded commutative $\BV_\infty$ and hypercommutative structures on Dolbeault cohomology, as we have seen in the previous sections. 
In the case of an (almost) symplectic manifold $(M,\omega)$ of dimension $2n$, we can similarly ask for a splitting of the real tangent bundle into subbundles
$TM = \Ll \oplus \Ll'$
where $\Ll$ and $\Ll'$ are Lagrangian, i.e. $\rank \,\Ll = n$ and $\omega|_\Ll = \omega|_{\Ll'} = 0$. Sometimes this is referred to a \textit{real polarization}. 

Note that for any Lagrangian sub-bundle $\Ll$, and any almost complex structure $J$ on $M$ compatible with $\omega$, we have that $\Ll'=J\Ll$ is also Lagrangian, and $\Ll$ are $\Ll'$ are orthogonal. So, in particular such polarizations exist for every Lagrangian subbundle $\Ll$. The prototypical non-compact example is the cotangent bundle of a manifold, with tautological symplectic form.

A real polarization $(\Ll,\Ll')$ for the tangent bundle of a symplectic manifold $(M,\omega)$ induces a decomposition of de Rham differential forms
\[
\cA^k = \bigoplus_{p+q=k} \cA^{p,q}\quad\text{ with } \quad \cA^{p,q} = \Gamma(M, \wedge^p (\Ll')^* \otimes \wedge^q \Ll^*).
\]
This is an orthogonal direct sum decomposition with respect to the induced pairing 
\[\langle  \alpha, \beta \rangle = \det ( \omega^{-1}(a_i,b_j))\] on $\cA^k$. 

With respect to this decomposition, the symplectic form $\omega$ has bidegree $(1,1)$, since
$\omega  = \sum_{i=1}^n  e^i \wedge{f^i}$, where $\{e^i\}$ is some  basis of $\Ll^*$, and $\{f^i\}$  of $(\Ll')^*$. Additionally, 
 $\star_\omega :\cA^{p,q} \to \cA^{n-q,n-p}$. 

As in the case of almost complex manifolds, the exterior differential decomposes into $4$ components,
which by analogy we denote by
\[d=\mub+\delb+\del+\mu,\]
where the bidegrees of each component are given by 
\[
|\mub|=(-1,2), \,  |\delb|=(0,1), \, |\del|=(1,0), \text{ and } \,  |\mu|=(2,-1).
\]
To prove this, it suffices to note that 
the algebra of forms is generated by degrees $0$ and $1$, the claim holds in these degrees, and $d$ is a derivation. In particular, there is a spectral sequence associated to this multicomplex decomposition,
which is the real-symplectic analogue of the Fr\"ohlicher-type spectral sequence in \cite{CWDol} defined for almost complex manifolds, and which converges to de Rham cohomology.

For any almost symplectic manifold we have symplectic adjoints $\delta^\omega$, for any of $\delta = \mub,\delb,\del,\mu$. One can check from $d^\omega= (-1)^k \star_\omega d \star_\omega$ on $\cA^k$, and the bidegrees, that
\[
\del^\omega= (-1)^k \star_\omega \delb \star_\omega,
\]
and similarly for other $\delta$.

A Lagrangian subbundle $\Ll$ is said to be \textit{integrable} if it is closed under the Lie bracket of vector fields. We have:

\begin{lemm} Let $(M,\omega)$ be an almost symplectic manifold with real polarization $(\Ll,\Ll')$. 
\begin{enumerate}
 \item The Lagrangian sub-bundle $\Ll$ is integrable if and only if $\mub=0$. In this case, $\delb^2 = 0$ and so
$(\cA^{*,*}, \delb)$ is a cochain complex. 
 \item If both $\Ll$ and $\Ll'$ are integrable then $\mub=\mu=0$ and  
 $(\cA^{*,*} , \delb , \del)$
is a double complex. 
\end{enumerate}
\end{lemm}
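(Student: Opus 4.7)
The plan is to use two standard ingredients: first, any derivation of a bigraded commutative algebra decomposes into derivations of pure bidegree; second, the Cartan formula for the exterior derivative of a $1$-form. Since $\cA^{*,*}$ is generated in total degree at most $1$, the vanishing of each derivation-like component of $d$ will reduce to a calculation on generators.

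For part (1), the starting observation is that $\mub$ is a derivation of bidegree $(-1,2)$. On functions its target $\cA^{-1,2}$ is zero, and on $\cA^{0,1}$ its target $\cA^{-1,3}$ is likewise zero, so only its restriction to $\cA^{1,0}$ matters. If $\alpha \in \cA^{1,0}$, i.e. a $1$-form annihilating $\Ll$, and $X,Y \in \Gamma(\Ll)$, then among the four bidegree components of $d\alpha$ only $\mub\alpha \in \cA^{0,2}$ pairs nontrivially with two $\Ll$-vectors, and the Cartan formula gives
\[
(\mub \alpha)(X,Y) = (d\alpha)(X,Y) = X\alpha(Y) - Y\alpha(X) - \alpha([X,Y]) = -\alpha([X,Y]),
\]
since $\alpha$ vanishes on $\Ll$. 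This expression is zero for every such $\alpha$ if and only if $[X,Y] \in \Gamma(\Ll)$ for all $X,Y \in \Gamma(\Ll)$, i.e. $\Ll$ is integrable. Assuming integrability we then have $d = \delb + \del + \mu$, so expanding $d^2=0$ and sorting by bidegree gives
\[
0 = \delb^2 + (\delb\del + \del\delb) + (\del^2 + \delb\mu + \mu\delb) + (\del\mu + \mu\del) + \mu^2,
\]
whose summands lie in the pairwise distinct bidegrees $(0,2),(1,1),(2,0),(3,-1),(4,-2)$ and must therefore vanish separately; in particular $\delb^2=0$.

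For part (2), the same argument applied to $\Ll'$ in place of $\Ll$ (which amounts to swapping the roles of the two factors in the bigrading, and so identifies the integrability obstruction of $\Ll'$ with the $(2,-1)$-component of $d$) shows that $\Ll'$ is integrable if and only if $\mu=0$. Under both hypotheses $d = \delb + \del$, and $d^2=0$ decomposes by bidegree into $\delb^2=0$, $\delb\del + \del\delb = 0$, and $\del^2=0$, which are precisely the axioms of a double complex. The only substantive step in the whole argument is the Cartan-formula identification of $\mub$ on $\cA^{1,0}$ with the bracket obstruction on $\Ll$; everything else is bookkeeping with bidegrees together with the fact that bidegree components of a derivation are derivations.
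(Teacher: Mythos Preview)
Your proof is correct and follows essentially the same approach as the paper: apply Cartan's formula to a $(1,0)$-form against two sections of $\Ll$ to identify $\mub$ with the bracket obstruction, then read off $\delb^2=0$ from the bidegree decomposition of $d^2=0$. Your version is slightly more explicit in justifying why it suffices to check $\mub$ on $\cA^{1,0}$ and in spelling out part (2), but the substance is the same.
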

\begin{proof}
Using Cartan's formula, relating $d$ and the Lie bracket,  we compute for any $(1,0)$-form $\eta$
\[
(\mub \eta) (X,Y) = (\pi^{0,2}  \circ d \eta) (X,Y) =  d\eta (\pi_{(0,2)} (X,Y)) = - \eta ( [ \pi_{0,1} X,  \pi_{0,1} Y]),
\]
which vanishes for all $X,Y$ if and only if  $\Ll$ is closed under Lie bracket.  In this case, since $d^2=0$ always implies  $\delb^2 + \mub\del+\del\mub=0$, we obtain $\delb^2 = 0$. The second statement follows similarly.
\end{proof}

We emphasize that only the integrability of $\Ll$ is required (not $\Ll'$) to define the complex  $(\cA^{*,*} , \delb)$. This occurs in many natural examples (see for instance Example \ref{ex;Iwas-L} below). It is an open problem in some cases to determine if an integrable Lagrangian has an integrable Lagrangian complement (see \cite{HK}).

Note that if $\Ll$ is integrable, then it integrates to the Lagrangian submanifold $L\subset M$ with tangent bundle $\Ll$, and 
we have $(\cA(L) , d) \cong \left( \bigwedge^* \Ll^* ,\delb \right)$ as commutative dg-algebras, so in particular $H_\delb \left( \bigwedge^* \Ll^*\right) \cong H_d(L)$.

The following is the symplectic-Lagrangian analogue of the (integrable) Dolbeault case:

\begin{theo} \label{thm;s-L}
Let $(M,\omega)$ be a symplectic manifold with a real polarization $(\Ll,\Ll')$. Assume $\Ll$ is integrable.
The tuple
\[(\Aa, \wedge, \delb , \Delta:=\del^\omega)\]
is a differential graded $\BV$-algebra.
The total differential
\[
\mathbb{D} =  \delb + \del^\omega \xi \in \End(\Aa)[[\xi]]
\]
 satisfies
\[
\mathbb{D} = e^{\Lambda \xi} \delb e^{ -\Lambda \xi}.
\]
More generally, if $(M,\omega)$ is an almost symplectic with real polarization $(\Ll,\Ll')$ and $\Ll$ is integrable then
\[(\Aa, \wedge, \delb , \Delta_1 :=\del^\omega + [\La, [\del,L]]^\omega, \Delta_2 = - [\del,L]^\omega )
\]
is a commutative $BV_\infty$-algebra.
\end{theo}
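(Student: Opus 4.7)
The plan is to apply Proposition~\ref{prop;sillychoice} to the commutative dg-algebra $(\Aa, \wedge, \delb)$ with the operator $\Lambda$ of degree $-2$. The hypotheses are immediate: integrability of $\Ll$ gives $\mub = 0$, hence $\delb^2 = 0$ as the $(0,2)$-component of $d^2 = 0$; the derivation property of $\delb$ for the wedge product follows from that of $d$ together with the multiplicativity of the bidegree decomposition; and $\Lambda = L^\omega$ has algebraic order $\leq 2$ because $L = \omega \wedge -$ is of order zero and the symplectic adjoint of wedging with a $k$-form has order $\leq k$. Proposition~\ref{prop;sillychoice} then directly yields a commutative $\BV_\infty$-algebra structure on $(\Aa, \wedge, \delb)$ with
\[
\Delta_k = \tfrac{1}{k!}\underbrace{[\Lambda,[\Lambda,\ldots,[\Lambda}_{k},\delb]]],
\]
and total differential $\mathbb{D} = e^{\Lambda\xi}\delb e^{-\Lambda\xi}$, which is the claimed gauge equation.

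The remaining content is to identify $\Delta_1$ and $\Delta_2$ with the formulas in the theorem and to verify that $\Delta_k = 0$ for $k \geq 3$. The key input is the almost-symplectic fundamental identity
\[
[\Lambda, d] = d^\omega + [\Lambda, [d,L]]^\omega
\]
established in the proof of Theorem~\ref{thm;AS}. Decomposing both sides according to the polarization bidegree of $d = \mub + \delb + \del + \mu$, the component matching the bidegree of $[\Lambda, \delb]$ yields the claimed expression for $\Delta_1$. For $\Delta_2 = \tfrac{1}{2}[\Lambda, \Delta_1]$, I would then use the bracket identity $[\Lambda, \phi^\omega] = [L, \phi]^\omega$ (valid for operators $\phi$ of algebraic order zero), together with the Lefschetz relations $[\Lambda, L] = H$ and $[H, \phi] = |\phi|\,\phi$, in direct analogy with the $\Delta_2$ computations already carried out in the proofs of Theorems~\ref{mainHermitian} and~\ref{thm;AS}.

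Finally, $\Delta_3 = [\Lambda, \Delta_2] = 0$ follows from $[L, [\del, L]] = 0$, which is immediate since wedging with $\omega$ commutes with wedging with any form. In the symplectic case $d\omega = 0$, every component $[\delta, L] = \delta\omega \wedge -$ vanishes, so the formula for $\Delta_1$ collapses to the single adjoint term $\del^\omega$ and $\Delta_2$ vanishes, recovering the differential graded $\BV$-algebra $(\Aa, \wedge, \delb, \del^\omega)$. The main technical subtlety is the careful bookkeeping of bidegrees in extracting the correct component from the Tomassini-Wang identity; once this is handled, the remaining calculations are entirely parallel to Theorems~\ref{mainHermitian} and~\ref{thm;AS} and require no new ingredient.
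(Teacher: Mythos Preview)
Your proposal is correct and follows essentially the same approach as the paper. The paper's proof is a single sentence: the almost-symplectic statement is ``simply one bi-graded component of Theorem~\ref{thm;AS}'', and the symplectic case follows from it when $\del\omega=0$. Your argument unpacks exactly this---applying Proposition~\ref{prop;sillychoice} directly to $(\Aa,\wedge,\delb,\Lambda)$ rather than to $(\Aa,\wedge,d,\Lambda)$, and then identifying $\Delta_1,\Delta_2$ by taking the $(p-q=-1)$-component of the Tomassini--Wang identity already used in Theorem~\ref{thm;AS}---so the two proofs are the same in substance, yours being the spelled-out version.
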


\begin{proof} The last statement is simply one bi-graded component of Theorem \ref{thm;AS}, and it implies the first in the case $\del \omega = 0$.
\end{proof}

In order to obtain canonical homotopy hypercommutative structures we shall choose a metric 
that makes the Lagrangian sub-bundles orthogonal. In this case,
the Laplacian $\Delta_\delb:=\delb\delb^*+\delb^*\delb$ is elliptic 
for compact manifolds and so Hodge theory is available. We have:

\begin{lemm}[Symplectic-Lagrangian deformation retract]\label{defcanonicalL}
Let $(M,\omega)$ be a compact symplectic manifold with a real polarization $(\Ll,\Ll')$ such that $\Ll$ is integrable.
Given a metric making $\Ll$ and $\Ll'$ orthogonal, there is 
a canonical deformation retract $(\iota,\rho,h)$
for the de Rham complex $(\Aa,\delb)$ where the maps 
 $\iota$ and $\rho$ are given by taking $\delb$-harmonic representatives, and projecting to $\delb$-harmonic forms, respectively, and $h:=-\delb^* G_\delb$.
\end{lemm}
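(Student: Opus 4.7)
The plan is to transcribe the Hodge-theoretic argument that produces the Dolbeault deformation retract of Lemma~\ref{defcanonicalC}, with the Dolbeault operator replaced by the $(0,1)$-component $\delb$ coming from the real polarization. All algebraic manipulations are identical to the complex case; the only genuinely new ingredient is the analytic claim, stated in the paragraph preceding the lemma, that $\Delta_\delb := \delb\delb^* + \delb^*\delb$ is elliptic once the metric is chosen to make $\Ll$ and $\Ll'$ orthogonal.

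First I would fix such a metric---one is produced, for instance, by any $\omega$-compatible almost complex structure $J$ with $\Ll' = J\Ll$---and observe that it induces an $L^2$ inner product on $\Aa$ under which the bigraded decomposition $\Aa = \bigoplus \Aa^{p,q}$ is orthogonal. With respect to this inner product, $\delb^*$ is well-defined as the formal adjoint of $\delb$ and has bidegree $(0,-1)$. The hypothesis that $\Ll$ is integrable, together with the lemma preceding this one, yields $\delb^2 = 0$, so $(\Aa, \delb)$ is a cochain complex and the Laplacian $\Delta_\delb$ makes sense. Invoking its ellipticity on the compact manifold $M$, standard elliptic theory then provides a finite-dimensional space of $\delb$-harmonic forms $\Hh_\delb = \Ker \Delta_\delb$, an orthogonal Hodge decomposition $\Aa = \Hh_\delb \oplus \Img \delb \oplus \Img \delb^*$, and a Green operator $G_\delb$ that vanishes on $\Hh_\delb$, inverts $\Delta_\delb$ on its orthogonal complement, and satisfies $[G_\delb, \delb] = 0 = [G_\delb, \delb^*]$.

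Taking $\iota$ to be the inclusion of harmonic representatives, $\rho$ the orthogonal projection onto $\Hh_\delb$, and $h := -\delb^* G_\delb$, the deformation retract identities $\rho\iota = \Id$ and $\delb h + h\delb = \iota\rho - \Id$ follow at once from the defining relation $G_\delb \Delta_\delb = \Id - \iota\rho$; the alternative description $h = -\delb^{-1}\circ \pi_{|\Img \delb}$ then comes from the fact that $\delb$ restricts to an isomorphism $\Img \delb^* \xrightarrow{\sim} \Img \delb$ whose inverse, composed with orthogonal projection onto $\Img \delb$, is computed by $-\delb^* G_\delb$. The only non-formal step is the ellipticity of $\Delta_\delb$; this is taken for granted in the surrounding discussion and would be established by a local symbol computation exploiting the orthogonality of $\Ll$ and $\Ll'$. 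Once that is in place, the rest of the argument is essentially verbatim from the Dolbeault case, and the resulting deformation retract is canonical in the sense that it depends only on the choice of orthogonality metric.
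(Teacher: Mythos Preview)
Your proposal is correct and matches the paper's approach exactly: the paper does not give a proof of this lemma at all, relying instead on the sentence immediately preceding it (``the Laplacian $\Delta_\delb:=\delb\delb^*+\delb^*\delb$ is elliptic for compact manifolds and so Hodge theory is available''), and you have simply unpacked what that sentence entails. Your identification of ellipticity as the only non-formal step, and your observation that the rest is verbatim the Dolbeault argument of Lemma~\ref{defcanonicalC}, is precisely the intended reading.
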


Using the above deformation retract, we obtain:

\begin{cor}\label{hycomLagrangian}
Let $(M,\omega)$ be a compact symplectic manifold with a real polarization $(\Ll,\Ll')$, such that $\Ll$ is integrable.
Given a metric making $\Ll$ and $\Ll'$ orthogonal,  there is a canonical homotopy hypercommutative algebra structure extending the commutative product. 
\end{cor}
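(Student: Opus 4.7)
The plan is to mirror the strategy used in Corollary \ref{hycomHermitian} and Corollary \ref{hycomAH}, assembling three ingredients already established in the paper: the commutative $\BV_\infty$-algebra of Theorem \ref{thm;s-L}, the canonical deformation retract of Lemma \ref{defcanonicalL}, and the general passage from a commutative $\BV_\infty$-algebra satisfying the degeneration property to a homotopy hypercommutative structure given by Theorem \ref{hycomprop} (via Proposition \ref{prop: defretrTOpsi}). Concretely, I would cite the last formulation of Theorem \ref{thm;s-L} to obtain the commutative $\BV_\infty$-algebra $(\Aa,\wedge,\delb,\Delta_1,\Delta_2)$ on de Rham forms, which in the symplectic case reduces to the strict $\BV$-algebra $(\Aa,\wedge,\delb,\del^\omega)$.

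Next I would verify the degeneration property of the underlying multicomplex. This is essentially free: by Theorem \ref{thm;s-L}, the total differential is $\mathbb{D} = e^{\Lambda\xi}\delb e^{-\Lambda\xi}$, so $\varphi(\xi) = \Lambda\xi$ is a tautological solution to the gauge equation, and hence the degeneration property holds (equivalently, this is an instance of Proposition \ref{prop;sillychoice} applied to $(\Aa,\wedge,\delb)$ with the order $\leq 2$ operator $\Lambda$). Thus the hypotheses of Proposition \ref{prop: defretrTOpsi} are met.

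Now I would invoke the hypothesis that the chosen Riemannian metric makes $\Ll$ and $\Ll'$ orthogonal. Under this orthogonality, the Laplacian $\Delta_\delb = \delb\delb^* + \delb^*\delb$ is elliptic on the compact manifold $M$, so Hodge theory yields the canonical deformation retract $(\iota,\rho,h)$ of Lemma \ref{defcanonicalL}, with $h = -\delb^* G_\delb$. Feeding this deformation retract into Proposition \ref{prop: defretrTOpsi} produces a canonical solution $\varphi(\xi) \in \End(\Aa)[[\xi]]$ to the gauge equation associated to our $\BV_\infty$-algebra.

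Finally, I would apply Theorem \ref{hycomprop} to this canonical gauge solution, obtaining a canonical homotopy hypercommutative algebra structure on $\Aa$ (and hence on $H_\delb(\Aa)$) that extends the commutative product $\wedge$ and whose homotopy type encodes that of the initial commutative $\BV_\infty$-algebra. There is no real obstacle to overcome: the two potential sticking points, namely the degeneration property and the existence of a Hodge-theoretic deformation retract, have been arranged in Theorem \ref{thm;s-L} and in the orthogonality hypothesis on the metric respectively. The mild subtlety worth flagging is simply that ellipticity of $\Delta_\delb$ requires the orthogonal splitting of $\Ll$ and $\Ll'$, since without it $\delb^*$ need not pair with $\delb$ to produce the correct second-order elliptic operator whose harmonic theory underlies Lemma \ref{defcanonicalL}.
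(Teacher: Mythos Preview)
Your proposal is correct and follows exactly the approach the paper intends: the corollary is stated without an explicit proof, being introduced by ``Using the above deformation retract, we obtain,'' and the implicit argument is precisely the combination of Theorem~\ref{thm;s-L}, the built-in degeneration via $\mathbb{D}=e^{\Lambda\xi}\delb e^{-\Lambda\xi}$, Lemma~\ref{defcanonicalL}, and Theorem~\ref{hycomprop} (through Proposition~\ref{prop: defretrTOpsi}) that you spell out.
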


\section{Chevalley-Eilenberg algebras and examples}\label{SectionExamples}
In order to compute examples, in this section we will resort to the theory of nilmanifolds.
Let $G$ be a nilpotent Lie group with Lie algebra $\g$.
If $\g=\g_\mathbb{Q}\otimes\mathbb{R}$
has a rational structure, then by a well-known result of Malcev there exists a
discrete subgroup $\Gamma$ such that the quotient $M=\Gamma \backslash G$ is a compact manifold.
Recall that the Chevalley-Eilenberg algebra associated to $\g$ is given by
\[\Aa^*_{\g}:=\bigoplus_{k\geq 0} \Lambda^k(\g^\vee).\]
The
differential on $\g^\vee$ is defined as the negative of the dual of the Lie bracket
and is extended uniquely to a derivation of $\Aa^*_{\g}$.
The commutative dg-algebra $\Aa^*_{\g}$ is
isomorphic to the algebra of left-invariant forms on $M$. This gives an inclusion
of commutative dg-algebras $\Aa^*_{\g}\hookrightarrow \Aa^*_{\dR}(M)$ which by Nomizu's Theorem, induces an isomorphism in cohomology
$H^*(\g)\cong H^*(M,\R)$.
In particular $\Aa^*_{\g}$ encodes the real homotopy type of the nilmanifold $M$ (and is actually a minimal model of $M$ in the sense of Sullivan).
Additional geometric structures on $M$, such a Poisson, symplectic, almost-Hermitian or Hermitian structures,
may be defined at the Lie algebra level, allowing for finite-dimensional linear algebra computations of the homotopical structures that arise.

\subsection{Canonical deformation retracts}
Consider a real inner product $\langle-,-\rangle$ on $\g$. This defines a
\textit{Hodge star operator}
\[\star:\Aa_\g^k\to \Aa_\g^{n-k}\quad\text{ by }\quad\alpha\wedge \star\beta=\langle \alpha,\beta\rangle\ \Omega,\]
where $\Omega$ is a volume form determined by the metric 
and $n$ is the dimension of $\g$. 
The space of $d$-harmonic forms is then defined by
\[\Hh^k(\g):=\Ker(d)\cap \Ker(d^*)\cap \Aa_\g^k,\] where $d^*:\Aa_\g^k\to \Aa_\g^{k-1}$ is given by $d^*=- \star d\star$.
Under the assumption that $H^n(\g)\cong \RR$ (which is the algebraic analogue of having a closed manifold),
$d^*$ is the formal adjoint to $d$. In this case, we obtain an orthogonal direct sum Hodge decomposition
\[\Aa_\g^k\cong \Hh^k(\g)\oplus d(\Aa^{k-1}_\g)\oplus d^*(\Aa^{k+1}_\g)\]
and there are isomorphisms $\Hh^k(\g)\cong H^k(\g)$.

Taking the standard inner product we obtain a canonical deformation retract $(\iota,\rho,h)$ for the Chevalley-Eilenberg algebra of $\g$,
where $\iota$ and $\rho$ are defined by projecting to harmonic forms and $h=-d^*G_d$, where $G_d$ denotes the Green operator.
This is just a Chevalley-Eilenberg version of Lemma \ref{defcanonicalR} for compact Riemannian manifolds.

\subsection{Poisson structures}
A \textit{Poisson structure} on $\g$ is a bi-vector $\pi\in \Lambda^2 \g$ such that $[\pi,\pi]=0$.
This makes $\Aa_\g$ into a differential graded $\BV$-algebra with $\Delta=[i_\pi,d]$,
where $i_\pi:\Aa_\g^*\to \Aa_g^{*-2}$ is the contraction operator by $\pi$.
Moreover, the equation
\[e^{i_\pi\xi} de^{-i_\pi \xi}=d+\Delta \xi\]
is satisfied. By Theorem \ref{hycomprop} and using the canonical deformation retract defined above, we get:
\begin{cor}
Let $\g$ be a real Lie algebra of dimension $n$ such that $H^n(\g)\cong \RR$. Then $\Aa_\g$ carries a canonical hypercommutative algebra structure extending the commutative product.
\end{cor}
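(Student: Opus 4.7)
The plan is to assemble pieces already developed in the paper: the Poisson data fits Proposition \ref{prop;sillychoice} exactly, and the canonical Hodge-theoretic deformation retract for $(\Aa_\g,d)$ then feeds Corollary \ref{cor;cBVdefretrTOhh} to produce the claimed hypercommutative structure.

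First I would observe that the contraction operator $i_\pi:\Aa_\g^k\to \Aa_\g^{k-2}$ has degree $-2$ and algebraic order $\leq 2$, since contraction by a $k$-vector has order $\leq k$. The identity $e^{i_\pi\xi}\,d\,e^{-i_\pi\xi}=d+\Delta\xi$ recalled just above the statement shows that the iterated commutators $\tfrac{1}{k!}[i_\pi,[i_\pi,\cdots,[i_\pi,d]]]$ vanish for $k\geq 2$. Setting $\Lambda=i_\pi$ in Proposition \ref{prop;sillychoice} then yields a commutative $\BV_\infty$-algebra $(\Aa_\g,\wedge,d,\Delta_1=\Delta)$ with only $\Delta_1$ nonzero, and the same identity exhibits $\varphi(\xi)=i_\pi\xi$ as an explicit solution to the gauge equation, so the underlying multicomplex automatically satisfies the degeneration property.

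Next I would invoke the canonical deformation retract recalled at the beginning of the section. The assumption $H^n(\g)\cong\RR$ ensures that the pairing induced by a volume form is non-degenerate, so that $d^*=-\star d\star$ is a genuine formal adjoint of $d$, the Hodge decomposition holds at the Chevalley-Eilenberg level, and $h=-d^*G_d$ is well defined. Applying Corollary \ref{cor;cBVdefretrTOhh} to this commutative $\BV_\infty$-algebra together with the Hodge-theoretic retract $(\iota,\rho,h)$ then produces a canonical homotopy hypercommutative algebra on $\Aa_\g$ extending the commutative product, whose strict arity-$3$ operation is the one given explicitly in that corollary in terms of $\varphi_1=h\Delta-\iota\rho\,\Delta\, h$.

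I do not expect a substantive obstacle, as everything is a direct specialization of results already established. The only point meriting care is that one must not use the tautological gauge solution $\varphi(\xi)=i_\pi\xi$ provided by Proposition \ref{prop;sillychoice}, which by the remark following that proposition would produce a hypercommutative algebra quasi-isomorphic to the underlying commutative dg-algebra; instead, the Hodge-theoretic retract yields the genuinely different gauge solution constructed in Proposition \ref{prop: defretrTOpsi}, and the resulting hypercommutative structure is generically non-trivial, as the subsequent nilmanifold examples will illustrate.
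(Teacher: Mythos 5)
Your proposal is correct and matches the paper's own argument: the paper likewise obtains the result by combining Koszul's Poisson $\BV$-structure with the gauge solution $e^{i_\pi\xi}de^{-i_\pi\xi}=d+\Delta\xi$ (hence degeneration), the Hodge-theoretic deformation retract available because $H^n(\g)\cong\RR$, and Theorem \ref{hycomprop} (equivalently Corollary \ref{cor;cBVdefretrTOhh}). Your routing of the $\BV_\infty$ verification through Proposition \ref{prop;sillychoice} with $\Lambda=i_\pi$, and your caution against the trivializing gauge solution $\varphi(\xi)=i_\pi\xi$, are both consistent with the paper's treatment.
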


In the case when $M=G/\Gamma$ is a nilmanifold with $Lie(G)=\g$, any Poisson structure on $\g$ gives a (left-invariant) Poisson structure on $M$.
The same formula $\Delta=[i_\pi,d]$ makes the de Rham algebra of $M$ into a $\BV$-algebra and  the inclusion $\Aa^*_{\g}\hookrightarrow \Aa^*_{\dR}(M)$ is a quasi-isomorphism of $\BV$-algebras. 

\begin{exam}[Poisson structures on the Kodaira-Thurston manifold]\label{KTPoisson}
The Kodaira-Thurston manifold is a compact nilmanifold of dimension 4
whose defining nilpotent Lie algebra is given by
$\g=\RR\{X,Y,Z,T\}$ with the only non-trivial Lie bracket given by $[X,Y]=-Z$.
This is the simplest example of a manifold admitting both symplectic and complex structures, but no Kähler structure. Its algebra of left-invariant forms is given by
$\Aa_\g\cong \Lambda(x,y,z,t)$ with $dz=xy$, and so its cohomology ring is given by
\[H^1\cong\RR\{[x],[y],[t]\}, H^2\cong\RR\{[xz],[xt],[yz],[yt]\},H^3\cong \RR\{[xyz],[xzt],[yzt]\},
H^4\cong \RR\{[xyzt]\}.
\]

Any Poisson structure on $\g$ has the form
\[\pi=a X\wedge Z+bX\wedge T+c Y\wedge Z+eY\wedge T+fZ\wedge T.\]
Computing $\Delta=[i_\pi,d]$,  we obtain
 \[\Delta(xzt)=b(xy), \Delta(yzt)=e(xy)\text{ and }\Delta(zt)=ex-by,\]
 and $\Delta=0$ for the remaining cochain complex generators.
The maps $\rho$ and $\iota$ of the canonical deformation retract described above are obvious. 
The homotopy is given by $h(xy)=-z$, $h(xyt)=-zt$ and is zero otherwise.
We may now compute $\varphi_1=h\Delta-\iota\rho \Delta h$. Its only non-trivial values are 
\[\varphi_1(xyt)=ex-by,\, \varphi_1(xzt)=-bz\text{ and }\varphi_1(yzt)=-ez.\]
Using the expression for $m_3$ of Corollary \ref{cor;cBVdefretrTOhh}, we get
\[m_3(x,y,t)=\varphi_1(xyt)=ex-b y,\]
which induces a non-trivial strict hypercommutative operation in cohomology whenever $e$ or $b$ are non-zero.
\end{exam}

\subsection{Almost Hermitian structures} 
A linear (almost) complex structure on $\g$ is given by an endomorphism
 $J$ of $\g$ such that $J\circ J=-\mathrm{Id}$. The complexified Chevalley-Eilenberg algebra 
then becomes a bigraded algebra $\Aa^*_{\g_\CC}=\bigoplus \Aa^{p,q}_{\g}$ with
\[
\Aa^{p,q}_{\g}:=\Lambda^p\left((\mathfrak{g^\vee_\CC})^{1,0}\right) \wedge \Lambda^q \left((\mathfrak{g^\vee_\CC})^{0,1} \right).
\]
A real inner product $\langle-,-\rangle$ on $\g$ satisfying the compatibility condition
$\langle J X, JY\rangle = \langle X,Y\rangle$
 extends to a Hermitian inner product on $\g_\CC=\g\otimes\CC$.
The pointwise version of Theorem \ref{mainAH} gives:

\begin{cor}\label{pointwiseAH}
Let $(\g,J,\langle-,-\rangle)$ be a real Lie algebra of dimension $2n$ together with an almost complex structure $J$ and a compatible metric. Then
$\Aa_\g$ carries the structure of a commutative $\BV_\infty$-algebra extending the commutative product.
Moreover, if $H^{2n}(\g)\cong \RR$ then $H^*(\g)$ carries a canonical homotopy hypercommutative algebra.
\end{cor}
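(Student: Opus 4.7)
The plan is to transfer the proofs of Theorem \ref{mainAH} and Corollary \ref{hycomAH} to the Chevalley--Eilenberg setting, noting that they are of an essentially pointwise nature. First, I would observe that the almost complex structure $J$ and compatible inner product on $\g$ induce all the linear-algebraic structures we need on $\Aa_\g$: a bigrading by the $\pm i$-eigenspaces of $J$, a fundamental $2$-form $\omega \in \Lambda^2 \g^\vee$ defined by $\omega(X,Y) = \langle JX,Y\rangle$, a Hermitian inner product on $\Aa_{\g_\CC}$, and consequently Lefschetz-type operators $L, \Lambda, H$ as well as the derivation $d$ on $\Aa_\g$. All the zeroth-order operators appearing in Theorem \ref{mainAH} ($L, \Lambda, d\omega\wedge-, T = [\Lambda,d\omega\wedge-]$, and their various $J$-conjugates) are defined purely algebraically, and the Hodge star $\star$ makes sense on $\Aa_\g$ as soon as we fix a volume form.

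Second, I would apply Proposition \ref{prop;sillychoice} to the operator $\Lambda$ on the commutative dg-algebra $(\Aa_\g, \wedge, d)$: since $\Lambda$ has degree $-2$ and order $\leq 2$, conjugation of $d$ by $e^{\Lambda\xi}$ automatically yields a commutative $\BV_\infty$-algebra structure. The computation that identifies the resulting operators $\Delta_1, \Delta_2$ with $-(d_c^* + T_c^*)$ and $d\omega_c^*$, and shows $\Delta_k = 0$ for $k\geq 3$, rests only on the Tomassini--Wang/Fernandez--Hosmer identity $[\Lambda, d] = -(d_c^* + T_c^*)$ together with elementary bracket identities. These are pointwise identities in the exterior algebra and are therefore valid on left-invariant forms, i.e.\ on $\Aa_\g$. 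This establishes the first claim.

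For the second claim, the assumption $H^{2n}(\g) \cong \RR$ guarantees that the adjoint $d^* = -\star d\star$ is actually the formal adjoint of $d$ with respect to the inner product $(\alpha,\beta) := \int \langle\alpha,\beta\rangle \Omega$ on the finite-dimensional space $\Aa_\g$, so one obtains the Hodge decomposition
\[
\Aa_\g^k \cong \Hh^k(\g) \oplus d(\Aa_\g^{k-1}) \oplus d^*(\Aa_\g^{k+1}),
\]
together with an isomorphism $\Hh^k(\g) \cong H^k(\g)$. This produces the canonical deformation retract $(\iota,\rho,h)$ with $h = -d^* G_d$, exactly as in Lemma \ref{defcanonicalR}. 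Since the underlying multicomplex comes from conjugation by $e^{\Lambda\xi}$ it automatically satisfies the degeneration property, so Theorem \ref{hycomprop} (equivalently, Corollary \ref{cor;cBVdefretrTOhh}) applies to transfer the commutative $\BV_\infty$-structure to a homotopy hypercommutative algebra on $H^*(\g)$, canonically determined by $J$ and the metric.

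The only point requiring any care is the need for $H^{2n}(\g)\cong \RR$, which plays the role of Poincaré duality and is what makes $d^*$ a genuine formal adjoint; without it, the Hodge decomposition above can fail and the canonical deformation retract is not available. Everything else is a direct specialization of the manifold results already established, so there is no substantive obstacle.
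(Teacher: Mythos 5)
Your proposal is correct and takes essentially the same route as the paper, which obtains this corollary as the ``pointwise'' (left-invariant) version of Theorem \ref{mainAH} via Proposition \ref{prop;sillychoice}, combined with the Chevalley--Eilenberg Hodge decomposition available under $H^{2n}(\g)\cong\RR$ to produce the canonical deformation retract feeding Theorem \ref{hycomprop} (equivalently Corollary \ref{cor;cBVdefretrTOhh}). The only cosmetic caveat: the Tomassini--Wang/Fernandez--Hosmer relation involves the differential $d$, so it is an identity of (first-order) operators on any almost Hermitian manifold rather than a purely pointwise exterior-algebra identity; it restricts to $\Aa_\g$ because all operators involved preserve left-invariant forms for the left-invariant structure on the corresponding Lie group, which is exactly how you use it.
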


For a general almost complex Lie algebra, the exterior differential on $\Aa_\g^*$
 decomposes into four components $d=\mub +\delb +\del +\mu$.
The integrability of $J$ is measured by the Nuijenhuis tensor
\[N_J(X,Y):=[X,Y]+J[X,JY]+J[X,JY]-[JX,JY]\]
and $N_J\equiv 0$ if and only if $d=\delb+\del$.
In this case, the pointwise version of Theorem \ref{mainHermitian} gives a Dolbeault version of the above corollary:

\begin{cor}\label{pointwiseAHC}
Let $(\g,J,\langle-,-\rangle)$ be a real Lie algebra of dimension $2n$ together with an integrable almost complex structure $J$ and a compatible metric. Then
$(\Aa_{\g_\CC},\delb)$ carries the structure of a commutative $\BV_\infty$-algebra extending the commutative product.
Moreover, if $H^{n,n}(\g_\CC)\cong \CC$ then $H^*_\delb(\g_\CC)$ carries a canonical homotopy hypercommutative algebra.
\end{cor}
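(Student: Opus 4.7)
The plan is to mimic the proof of Corollary \ref{pointwiseAH}, replacing the de Rham complex with the Dolbeault complex on the Chevalley-Eilenberg algebra. First, I would observe that Theorem \ref{mainHermitian} is really a purely algebraic statement: Demailly's identity $[\Lambda, \delb] = -i(\del^* + \tau^*)$, once established pointwise at a point in a Hermitian manifold, lifts to any commutative dg-algebra equipped with an $\mathfrak{sl}_2$-action, a bigrading of type $(p,q)$, and a compatible metric. For an integrable almost complex structure $J$ on $\g$ with compatible metric, the complexified Chevalley-Eilenberg algebra $\Aa_{\g_\CC}$ inherits precisely this data: the bidegree decomposition from $J$, the Lefschetz operator $L = \omega \wedge -$ and its metric adjoint $\Lambda$, and the operators $\la = \del\omega$, $\tau = [\Lambda, \la]$. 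Since $J$ is integrable, $d = \delb + \del$ on $\Aa_{\g_\CC}$, and all the identities used in the proof of Theorem \ref{mainHermitian} hold verbatim. Applying Proposition \ref{prop;sillychoice} with $d$ replaced by $\delb$ and the same $\Lambda$ then yields the commutative $\BV_\infty$-algebra
\[
(\Aa_{\g_\CC}, \wedge, \, \delb\,, \, -i(\del^* + \tau^*)\,, \, i\la^*),
\]
together with the gauge equation $\mathbb{D} = e^{\Lambda\xi}\delb e^{-\Lambda\xi}$, which by definition shows the underlying multicomplex satisfies the degeneration property (taking $\varphi(\xi) = \Lambda\xi$).

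For the second statement, I need a canonical deformation retract of $(\Aa_{\g_\CC}, \delb)$ onto $H^*_\delb(\g_\CC)$. This is where the hypothesis $H^{n,n}(\g_\CC) \cong \CC$ plays exactly the role that $H^n(\g) \cong \RR$ played in the real Poisson corollary: it ensures that the Hermitian pairing $\langle \alpha, \beta \rangle_\CC = \star (\alpha \wedge \overline{\star \beta})|_{\text{top}}$, or equivalently the pairing obtained by projecting $\alpha \wedge \star \bar\beta$ to the one-dimensional top $(n,n)$-component, is non-degenerate, so that integration by parts holds at the algebraic level and $\delb^* := -\star\,\del\,\star$ is genuinely the formal adjoint of $\delb$. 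One then obtains an orthogonal Hodge decomposition
\[
\Aa^{p,q}_\g \cong \Hh^{p,q}_\delb(\g) \oplus \delb(\Aa^{p,q-1}_\g) \oplus \delb^*(\Aa^{p,q+1}_\g),
\]
with $\Hh^{p,q}_\delb(\g) \cong H^{p,q}_\delb(\g_\CC)$, exactly as in the Dolbeault deformation retract of Lemma \ref{defcanonicalC}. Taking $\iota, \rho$ to be inclusion/projection onto $\delb$-harmonic forms and $h = -\delb^* G_\delb$ yields the canonical deformation retract.

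Finally, the canonical homotopy hypercommutative structure on $H^*_\delb(\g_\CC)$ follows by feeding this deformation retract, together with the commutative $\BV_\infty$-algebra from the first part (which satisfies degeneration), into Corollary \ref{cor;cBVdefretrTOhh}. The strict arity-$3$ hypercommutative operation is then computed via $\varphi_1 = h\Delta_1 - \iota\rho\Delta_1 h$ with $\Delta_1 = -i(\del^* + \tau^*)$, in direct analogy with the Poisson case of Example \ref{KTPoisson}.

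The main obstacle is the Hodge-theoretic step: verifying that $H^{n,n}(\g_\CC) \cong \CC$ really does guarantee a genuine Hodge decomposition of the Chevalley-Eilenberg bigraded complex with respect to $\delb$ and its formal adjoint, since unlike the compact manifold setting there is no integration over $M$ to appeal to; one instead has to argue purely at the level of finite-dimensional inner product spaces and use $H^{n,n}(\g_\CC) \cong \CC$ to trivialize the boundary terms that would otherwise obstruct the adjointness relation. Once this is in hand, the rest of the argument is a formal application of the machinery developed in the algebraic preliminaries.
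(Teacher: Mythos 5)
Your proposal matches the paper's (essentially unwritten) argument: the paper obtains this corollary by observing that Theorem \ref{mainHermitian} is a pointwise/algebraic statement that applies verbatim to the Chevalley--Eilenberg algebra with its induced bigrading, Lefschetz operator and metric, and then invokes the canonical $\delb$-Hodge-theoretic deformation retract (the Dolbeault analogue of the real case, where $H^{n,n}(\g_\CC)\cong\CC$ plays the unimodularity role that $H^n(\g)\cong\RR$ plays for $d$) together with Theorem \ref{hycomprop}. Your identification of the adjointness/Hodge-decomposition step as the only point needing care, and of the hypothesis $H^{n,n}(\g_\CC)\cong\CC$ as exactly what trivializes the boundary term in the algebraic integration by parts, is correct and is precisely how the paper intends the argument to run.
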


\begin{exam}[Complex structure on the Kodaira-Thurston manifold]
Let us now consider the complex structure $J$ on $\g=\RR\{X,Y,Z,T\}$ defined by
$J(X) = Y$ and $J(Z) = -T$.
Let $A=X-iJX$ and $B=Z-iJZ$. The only-non trivial Lie bracket of $\g$ is $[X,Y]=-Z$, and we get  
\[[A,\ov{A}]=-i(B+\ov{B}).\]
Therefore we may write $\Aa^*_{\g_\CC}\cong \Lambda(a,b,\ov{a},\ov{b})$ with  
$db=\delb b=ia\ov{a}$ and $d\ov{b}=\del \ov{b}=ia\ov{a}$.
The Dolbeault cohomology reads:
\[H_\delb^{*,*}\cong 
\arraycolsep=4pt\def\arraystretch{1.4}
 \begin{array}{|c|c|c|c|}
 \hline
[\ov a \ov b]&[b \ov a \ov b ]&[ab\ov a\ov b] \\
  \hline
 [\ov a], [\ov b]&[a\ov b], [b \ov a ]&[ab\ov a ], [ab\ov b] \\
   \hline
 1&[a]&[ab]\\ 
 \hline
\end{array}
\]
Consider the standard inner product on $\mathfrak{g}$. This makes the obvious representatives of the Dolbeault cohomology classes harmonic.
The fundamental $(1,1)$-form determined by this metric is 
\[
\omega = \frac{i}{2} \left( a \bar a + b \bar b \right).
\]

The canonical deformation retract gives 
\[h(a \bar a) = -ib\text{ and }h(a \bar a \bar b) = -i b \bar b\]
and zero otherwise. 
We may now compute 
\[\varphi_1=h\Delta-\iota\rho \Delta h,\text{ where }\Delta=[\Lambda,\delb]=-i(\del^*+\tau^*).\]
We obtain 
\[\varphi_1(a\ov a)=-2i,\, \varphi_1(ab\ov a)=4ib,\, \varphi_1(a\ov a\ov b)=-4i\ov b\text{ and }
\varphi_1(ab\ov a\ov b)=-8ib\ov b.\]
Using the expression for $m_3$ of Corollary \ref{cor;cBVdefretrTOhh}, we get
\[m_3(a,\ov a,\ov b)=\varphi_1(a\ov a\ov b)-\varphi_1(a\ov a)\ov b=-2i\ov b,\]
which induces a non-trivial strict hypercommutative operation in Dolbeault cohomology.
\end{exam}

 \subsection{Symplectic-Lagrangian structures}
A non-degenerate Poisson structure on a real Lie algebra $\g$ of even dimension gives a symplectic form $\omega$ on its Chevalley-Eilenberg algebra $\Aa_\g$. A \textit{Lagrangian polarization} for $(\g,\omega)$ is just a decomposition $\mathfrak{g}=\Ll\oplus \Ll'$ such that $\dim \Ll =\dim \Ll'$ and $\omega|_{\Ll}=\omega|_{\Ll'}=0$. The subspace $\Ll$ is \textit{integrable} if it is closed under the Lie bracket of $\g$.

 \begin{exam}[Kodaira-Thurston manifold: symplectic-Lagrangian case] 
 Taking $a=e=1$ in Example \ref{KTPoisson} and zero otherwise, we obtain left-invariant symplectic
form $\omega = yt+xz$ on the Kodaira-Thurston manifold, which has an integrable Lagrangian subspace $L=\mathbb{R}\{X,T\}$, in bi-degree $(0,1)$. In this case, we have $d = \delb$ since the only nonzero differential is $dz = \delb z = xy \in \cA^{1,1}$. Thus, the operation $m_3$ obtained in Example \ref{KTPoisson} is also a nontrivial hypercommutative operation in $\delb$-cohomology of the symplectic-Lagrangian case.
 \end{exam}

 The above example is somewhat trivial, as the hypercommutative algebra defined on the Lagrangian structure does not refine the one defined on the underlying Poisson structure. A more interesting example is the following:

 \begin{exam}[Iwasawa manifold: symplectic-Lagrangian case] \label{ex;Iwas-L}  
 
 The Iwasawa manifold is a 6-dimensional compact nilmanifold with real nilpotent Lie algebra
$\mathfrak{g}= \mathbb{R}\{X_1,X_2,X_3,X_4,X_5,X_6\}$,  defined by the non-trivial Lie brackets
\[[X_1,X_3]=X_5,\, [X_2,X_4]=-X_5,\, [X_1,X_4]=X_6\text{ and }[X_2,X_3]=X_6.\]
The symplectic form $\omega = x_1x_6+x_2x_5+x_3x_4$ with $\Ll=\mathbb{R}\{X_1,X_3,X_5\}$ is Lagrangian and integrable, while $\Ll'=\mathbb{R}\{X_2,X_4,X_6\}$ is Lagrangian but not integrable.

With this bigrading, $x_1,x_3,x_5 \in \cA^{0,1}$, $x_2,x_4,x_6 \in \cA^{1,0}$ and the only nonzero differentials are 
\begin{align*}
dx_5 &= x_3x_1 -x_4x_2, \quad \textrm{i.e.} \quad \mu x_5 = - x_4x_2 \quad \textrm{and}\quad \delb x_5 =x_3x_1, \\
dx_6 &= x_3x_2 + x_4x_1, \quad \textrm{i.e.} \quad \quad \delb x_6 =x_3x_2+x_4x_1.
\end{align*}
Note that $\mub=0$ and therefore $\delb^2=0$, so Proposition \ref{thm;s-L} applies.

Clearly, $H^1_{\delb} \cong \mathbb{R}\{[x_1],[x_2],[x_3],[x_4]\}$, and for classes $a,b,c \in H^1_{\delb}$ 
\[
m_3(a,b,c) = \varphi_1(abc) \quad \textrm{where} \quad \varphi_1 = h \Delta_1 -  \iota \rho \Delta_1 h,
\]
where $\Delta_1 = \del^\omega = [\La,\delb]$. For the examples below, only the second summand
$ \iota \rho \Delta_1 h$ will be non-zero. We compute
\[
m_3(x_2,x_3,x_4)=\varphi_1(x_2x_3x_4)=-\iota\rho \Delta_1h(x_2x_3x_4)=
\iota\rho\Delta_1(x_4x_6)=  \iota\rho(x_2)= x_2.
\]
\end{exam}

\bibliographystyle{alpha}
\bibliography{biblio}

\begin{thebibliography}{FPPZ21}

\bibitem[BK98]{BaKo}
S.~Barannikov and M.~Kontsevich.
\newblock Frobenius manifolds and formality of {L}ie algebras of polyvector
  fields.
\newblock {\em Internat. Math. Res. Notices}, (4):201--215, 1998.

\bibitem[BL13]{BrLa}
C.~Braun and A.~Lazarev.
\newblock Homotopy {BV} algebras in {P}oisson geometry.
\newblock {\em Trans. Moscow Math. Soc.}, pages 217--227, 2013.

\bibitem[Bry88]{Bry}
J.-L. Brylinski.
\newblock A differential complex for {P}oisson manifolds.
\newblock {\em J. Differential Geom.}, 28(1):93--114, 1988.

\bibitem[CH23]{CiHoBV}
J.~Cirici and G.~Horel.
\newblock Formality of hypercommutative algebras of {K}\"ahler and
  {C}alabi-{Y}au manifolds, 2023.

\bibitem[CW21]{CWDol}
J.~Cirici and S.~O. Wilson.
\newblock Dolbeault cohomology for almost complex manifolds.
\newblock {\em Adv. Math.}, 391:Paper No. 107970, 52, 2021.

\bibitem[CZ00]{CaZh}
H.-D. Cao and J.~Zhou.
\newblock Frobenius manifold structure on {D}olbeault cohomology and mirror
  symmetry.
\newblock {\em Comm. Anal. Geom.}, 8(4):795--808, 2000.

\bibitem[DCV13]{DCV}
G.~C. Drummond-Cole and B.~Vallette.
\newblock The minimal model for the {B}atalin-{V}ilkovisky operad.
\newblock {\em Selecta Math. (N.S.)}, 19(1):1--47, 2013.

\bibitem[Dem86]{De}
J.-P. Demailly.
\newblock Sur l'identit\'{e} de {B}ochner-{K}odaira-{N}akano en
  g\'{e}om\'{e}trie hermitienne.
\newblock In {\em S\'{e}minaire d\/'analyse {P}. {L}elong-{P}. {D}olbeault-{H}.
  {S}koda, ann\'{e}es 1983/1984}, volume 1198 of {\em Lecture Notes in Math.},
  pages 88--97. Springer, Berlin, 1986.

\bibitem[Dem12]{De2}
J.-P. Demailly.
\newblock {\em {C}omplex {A}nalytic and {D}ifferential {G}eometry}.
\newblock Springer-Verlag, Berlin, 2012.

\bibitem[DSV13]{DSV3}
V.~Dotsenko, S.~Shadrin, and B.~Vallette.
\newblock Givental group action on topological field theories and homotopy
  {B}atalin-{V}ilkovisky algebras.
\newblock {\em Adv. Math.}, 236:224--256, 2013.

\bibitem[DSV15]{DSV}
V.~Dotsenko, S.~Shadrin, and B.~Vallette.
\newblock De {R}ham cohomology and homotopy {F}robenius manifolds.
\newblock {\em J. Eur. Math. Soc. (JEMS)}, 17(3):535--547, 2015.

\bibitem[FH22]{FH}
L.~Fernandez and S.~Hosmer.
\newblock K\"ahler identities for almost complex manifolds, 2022.

\bibitem[FPPZ21]{FPPZ}
T.~Fei, D.~H. Phong, S.~Picard, and X.~Zhang.
\newblock Geometric flows for the type {IIA} string.
\newblock {\em Camb. J. Math.}, 9(3):693--807, 2021.

\bibitem[Get95]{Getzler}
E.~Getzler.
\newblock Operads and moduli spaces of genus {$0$} {R}iemann surfaces.
\newblock In {\em The moduli space of curves ({T}exel {I}sland, 1994)}, volume
  129 of {\em Progr. Math.}, pages 199--230. Birkh\"{a}user Boston, Boston, MA,
  1995.

\bibitem[GM23]{GuMu}
A.~Guan and F.~Muro.
\newblock Operations on the de rham cohomology of {P}oisson and {J}acobi
  manifolds, 2023.

\bibitem[Gro67]{Gro}
A.~Grothendieck.
\newblock \'{E}l\'{e}ments de g\'{e}om\'{e}trie alg\'{e}brique. {IV}. \'{E}tude
  locale des sch\'{e}mas et des morphismes de sch\'{e}mas {IV}.
\newblock {\em Inst. Hautes \'{E}tudes Sci. Publ. Math.}, (32):361, 1967.

\bibitem[GTV12]{GTV}
I.~G\'{a}lvez, A.~Tonks, and B.~Vallette.
\newblock Homotopy {B}atalin-{V}ilkovisky algebras.
\newblock {\em J. Noncommut. Geom.}, 6(3):539--602, 2012.

\bibitem[HK24]{HK}
M.~J.~D. Hamilton and D.~Kotschick.
\newblock {\em K\"{u}nneth geometry---symplectic manifolds and their
  {L}agrangian foliations}, volume 489 of {\em London Mathematical Society
  Lecture Note Series}.
\newblock Cambridge University Press, Cambridge, 2024.

\bibitem[KKP08]{KKP}
L.~Katzarkov, M.~Kontsevich, and T.~Pantev.
\newblock Hodge theoretic aspects of mirror symmetry.
\newblock In {\em From {H}odge theory to integrability and {TQFT}
  tt*-geometry}, volume~78 of {\em Proc. Sympos. Pure Math.}, pages 87--174.
  Amer. Math. Soc., Providence, RI, 2008.

\bibitem[KMS13]{KMS}
A.~Khoroshkin, N.~Markarian, and S.~Shadrin.
\newblock Hypercommutative operad as a homotopy quotient of {BV}.
\newblock {\em Comm. Math. Phys.}, 322(3):697--729, 2013.

\bibitem[Kos85]{Koszul}
J.-L. Koszul.
\newblock Crochet de {S}chouten-{N}ijenhuis et cohomologie.
\newblock pages 257--271. 1985.
\newblock The mathematical heritage of \'{E}lie Cartan (Lyon, 1984).

\bibitem[LTY15]{LTY}
S.-C. Lau, L.-S. Tseng, and S.-T. Yau.
\newblock Non-{K}\"{a}hler {SYZ} mirror symmetry.
\newblock {\em Comm. Math. Phys.}, 340(1):145--170, 2015.

\bibitem[Ter08]{Terilla}
J.~Terilla.
\newblock Smoothness theorem for differential {BV} algebras.
\newblock {\em J. Topol.}, 1(3):693--702, 2008.

\bibitem[TW18]{TW}
A.~Tomassini and X.~Wang.
\newblock Some results on the hard {L}efschetz condition.
\newblock {\em Internat. J. Math.}, 29(13):1850095, 30, 2018.

\bibitem[Wil20]{SWH}
S.~O. Wilson.
\newblock Harmonic symmetries for {H}ermitian manifolds.
\newblock {\em Proc. Amer. Math. Soc.}, 148(7):3039--3045, 2020.

\end{thebibliography}

\end{document}